\documentclass[reqno]{amsart}
%%%%%%%%%%%%%%%%%%%%%%%%%%%%%%%%%%%%%%%%%%%%%%%%%%%%%%%%%%%%%%%%%%%%%%%%%%%%%%%%%%%%%%%%%%%%%%%%%%%%%%%%%%%%%%%%%%%%%%%%%%%%%%%%%%%%%%%%%%%%%%%%%%%%%%%%%%%%%%%%%%%%%%%%%%%%%%%%%%%%%%%%%%%%%%%%%%%%%%%%%%%%%%%%%%%%%%%%%%%%%%%%%%%%%%%%%%%%%%%%%%%%%%%%%%%%
\usepackage{amsmath}
\usepackage{amssymb}
\usepackage{graphicx}
\usepackage{amsfonts}

\setcounter{MaxMatrixCols}{10}
%TCIDATA{OutputFilter=LATEX.DLL}
%TCIDATA{Version=5.50.0.2960}
%TCIDATA{<META NAME="SaveForMode" CONTENT="1">}
%TCIDATA{BibliographyScheme=Manual}
%TCIDATA{Created=Saturday, August 23, 2014 00:07:28}
%TCIDATA{LastRevised=Friday, March 06, 2015 10:21:01}
%TCIDATA{<META NAME="GraphicsSave" CONTENT="32">}
%TCIDATA{<META NAME="DocumentShell" CONTENT="Articles\SW\AMS Journal Article">}
%TCIDATA{CSTFile=amsartci.cst}

\newtheorem{theorem}{Theorem}
\theoremstyle{plain}

\newtheorem{claim}{Claim}

\newtheorem{example}{Example}

\newtheorem{lemma}{Lemma}

\newtheorem{proposition}{Proposition}
\newtheorem{remark}{Remark}

\numberwithin{equation}{section}

\begin{document}
\title[Fractal Networks modelled on Sierpinski Gasket]{Asymptotic formula on average path length
of fractal networks modelled on Sierpinski Gasket}
\author{Fei Gao}
\address{School of Computer Science and Information Technology, Zhejiang
Wanli University, 315100 Ningbo, P. R. China}
\email{dgaofei@sina.com}
\author{Anbo Le}
\address{Institute of Mathematics, Zhejiang Wanli University, 315100 Ningbo,
P. R. China}
\email{anbole@msn.com}
\author{Lifeng Xi}
\address{Institute of Mathematics, Zhejiang Wanli University, 315100 Ningbo,
P. R. China}
\email{xilifengningbo@yahoo.com}
\author{Shuhua Yin}
\address{Institute of Mathematics, Zhejiang Wanli University, 315100 Ningbo,
P. R. China}
\email{yinshuhua@126.com}
\subjclass[2000]{28A80}
\keywords{fractal, Sierpinski gasket, network, self-similarity}
\thanks{Lifeng Xi is the corresponding author. The work is supported by NSFC
(Nos.\ 11371329, 11471124), NSF of Zhejiang(Nos.\ LR13A1010001, LY12F02011).}

\begin{abstract}
In this paper, we introduce a new method to construct evolving networks
based on the construction of the Sierpinski gasket. Using self-similarity
and renewal theorem, we obtain the asymptotic formula for average path
length of our evolving networks.
\end{abstract}

\maketitle

\section{Introduction}

The Sierpinski gasket described in 1915 by W.~Sierpi\'{n}ski is a classical
fractal. Suppose $K$ is the solid regular triangle with vertexes $%
a_{1}=(0,0),$ $a_{2}=(1,0),$ $a_{3}=(1/2,\sqrt{3}/2).$ Let $%
T_{i}(x)=x/2+a_{i}/2$ be the contracting similitude for $i=1,2,3.$ Then $%
T_{i}:K\rightarrow K\ $and the Sierpinski gasket $E$ is the self-similar
set, which is the unique invariant set \cite{H} of IFS $\{T_{1},T_{2},T_{3}%
\},$ satisfying%
\begin{equation*}
E=\cup _{i=1}^{3}T_{i}(E).
\end{equation*}%
The Sierpinski gasket is important for the study of fractals, e.g., the Sierpinski gasket is a typical example of post-critically finite self-similar fractals on which the Dirichlet forms and Laplacians can be constructed by Kigami \cite{Ki1,Ki2},
see also Strichartz \cite{St}.

\begin{figure}[tbph]
\centering\includegraphics[width=0.65\textwidth]{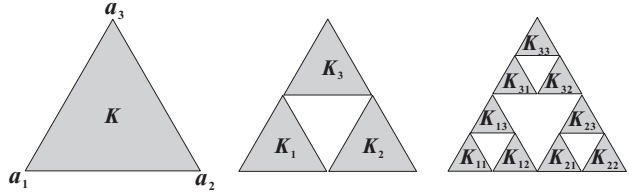} \vspace{-0.35cm}
\caption{The first two constructions of Sierpinski gasket}
\end{figure}

For the word $\sigma =i_{1}\cdots i_{k}$ with letters in $\{1,2,3\}$, i.e.,
every letter $i_{t}\in \{1,2,3\}$ for all $t\leq k,$ we denote by $|\sigma
|(=k)$ the length of word $\sigma .$ Given words $\sigma =i_{1}\cdots i_{k}$
and $\tau =j_{1}\cdots j_{n},$ we call $\sigma $ a prefix of $\tau $ and
denote by $\tau \prec \sigma ,$ if $k<n$ and $i_{1}\cdots i_{k}=j_{1}\cdots
j_{k}.$ We also write $\tau \preceq \sigma $ if $\tau =\sigma $ or $\tau
\prec \sigma .$ When $\tau \prec \sigma $ with $|\tau |=|\sigma |-1,$ we say
that $\tau $ is the father of $\sigma $ and $\sigma $ is a child of $\tau .$
Given $\sigma =i_{1}\cdots i_{k},$ we write $T_{\sigma }=T_{i_{1}}\circ
\cdots \circ T_{i_{k}}\text{ and }K_{\sigma }=T_{\sigma }(K)$ which is a
solid regular triangle with side length $2^{-|\sigma |}.$ For notational
convenience, we write $K_{\emptyset }=K$ with empty word $\emptyset .$ We
also denote $|\emptyset |=0.$ If $\tau \prec \sigma ,$ then $K_{\sigma
}\subset K_{\tau }.$ For solid triangle $K_{\sigma }$ with word $\sigma ,$
we denote by $\partial K_{\sigma }$ its boundary consisting of $3$ sides,
where every side is a line segment with side length $2^{-|\sigma |}.$

Complex networks arise from natural and social phenomena, such as the
Internet, the collaborations in research, and the social relationships.
These networks have in common two structural characteristics: the
small-world effect and the scale-freeness (\emph{power-law} degree
distribution), as indicated, respectively, in the seminal papers by Watts
and Strogatz \cite{WS98} and by Barab\'{a}si and Albert \cite{BA99}. In fact
complex networks also exhibit \emph{self-similarity} as demonstrated by
Song, Havlin and Makse \cite{SHM05} and fractals possess the feature of
\emph{power law} in terms of their fractal dimension (e.g. see \cite%
{Falconer03}). Recently self-similar fractals are used to model evolving
networks, for example, in a series of papers, Zhang et al.\ \cite%
{ZZFGZ07,ZZZCG08,GWZZW09} use the Sierpinski gasket to construct evolving
networks. There are also some complex networks modelled on self-similar
fractals, for example, Liu and Kong \cite{LK10} and Chen et al.\ \cite{CFW12}
study Koch networks, Zhang et al.\ \cite{ZZCYG07} investigate the
networks constructed from Vicsek fractals. See also Dai and Liu
\cite{D}, Sun et al. \cite{S} and Zhou et al. \cite{Zhou}.

\medskip

In the paper, we introduce a new method to construct evolving networks
modelled on Sierpinski gasket and study the asymptotic formula for average
path length.

Since $E$ is connected, we can construct the network from geometry as
follows.

Fix an integer $t,$ we consider a network $G_{t}$ with vertex set $%
V_{t}=\{\sigma :0\leq |\sigma |\leq t\}$ where $\#V_{t}=1+3+...+3^{t}=\frac{1%
}{2}(3^{t+1}-1).$ For the edge set of $G_{t},$ there is a unique edge
between distinct words $\sigma $ and $\tau $ (denoted by $\tau \sim \sigma $%
) if and only if
\begin{equation}
\partial K_{\sigma }\cap \partial K_{\tau }\neq \varnothing .  \label{vvv}
\end{equation}%
We can illustrate the geodesic paths in Figure 2 for $t=3$.
We have $233\sim 32\sim 312$ since $\partial K_{233}\cap \partial
K_{32}=\{C\}$ and $\partial K_{32}\cap \partial K_{312}=\{F\}.$ We also get
another geodesic path from $233\ $to $312:$ $233\sim 3\sim 312$ since $%
\partial K_{233}\cap \partial K_{3}=\{C\}$ and $\partial K_{3}\cap \partial
K_{312}=[G,F],$ the line segment between $G$ and $F.$ We also have some
geodesic paths from $21\ $to $312:$ $21\sim 1\sim 311\sim 312,\ 21\sim
\emptyset \sim 3\sim 312$ and $21\sim 2\sim 32\sim 312.$ Also we have $%
132\sim 1\sim \emptyset $ but $132\not\sim \emptyset ,$ then the geodesic
distance between $132$ and $\emptyset $ is $2.$

\begin{figure}[tbph]
\centering\includegraphics[width=0.9\textwidth]{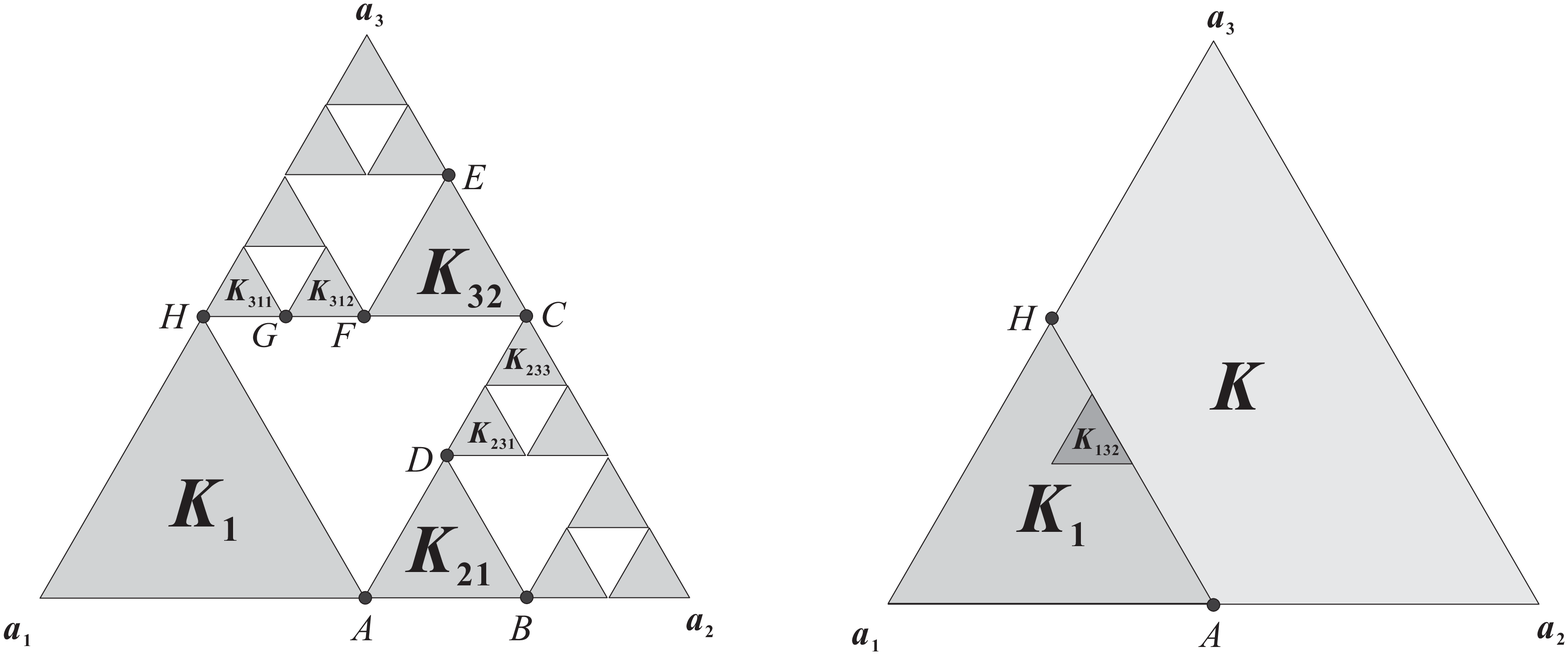} \vspace{-0.35cm}
\caption{{}}
\end{figure}

In Figure 2, $312\not\sim \emptyset $, $231\not\sim \emptyset $ and $132\not\sim
\emptyset$. In fact, by observation we have

\begin{claim}
\label{C:two}Suppose $\sigma \prec \tau $ with $\tau =\sigma \beta .$ Then $%
\sigma \sim \tau $ if and only if there are at most two letters in $\beta .$
In particular, if $\sigma \prec \tau $ and $|\tau |-|\sigma |\leq 2,$ then $%
\sigma \sim \tau .$
\end{claim}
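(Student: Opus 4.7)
The plan is to reduce to the case $\sigma = \emptyset$ and then characterize when a sub-triangle $K_\beta$ touches $\partial K$. Since $T_\sigma$ is a similarity carrying $K \to K_\sigma$, $\partial K \to \partial K_\sigma$ and $\partial K_\beta \to \partial K_{\sigma\beta} = \partial K_\tau$, we have $\partial K_\sigma \cap \partial K_\tau = T_\sigma(\partial K \cap \partial K_\beta)$, so $\sigma \sim \tau$ iff $\partial K \cap \partial K_\beta \neq \emptyset$. Writing $D(\beta) \subseteq \{1,2,3\}$ for the set of distinct letters of $\beta$, the statement reduces to showing $\partial K \cap \partial K_\beta \neq \emptyset \Longleftrightarrow |D(\beta)| \leq 2$.

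For the ``$\Longleftarrow$'' direction, suppose all letters of $\beta$ lie in some $\{i,j\}$ (possibly $i = j$). The segment $L = [a_i, a_j] \subset \partial K$ satisfies $T_i(L) = [a_i, (a_i+a_j)/2]$ and $T_j(L) = [(a_i+a_j)/2, a_j]$, so $T_i(L) \cup T_j(L) = L$; hence $T_\beta(L) \subseteq L \subseteq \partial K$. But $T_\beta(L)$ is one full side of the triangle $K_\beta$, so it lies in $\partial K_\beta$, exhibiting a non-degenerate segment in $\partial K_\beta \cap \partial K$.

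For the ``$\Longrightarrow$'' direction I would pass to barycentric coordinates on $K$: write $x = x_1 a_1 + x_2 a_2 + x_3 a_3$ with $x_j \geq 0$, $\sum_j x_j = 1$, so that $\partial K = \bigcup_{j=1}^3 \{x : x_j = 0\}$. A short induction on $k = |\beta|$ gives the closed form $T_\beta(x) = x/2^k + \sum_{t=1}^k 2^{-t} a_{i_t}$ for $\beta = i_1 \cdots i_k$, whence the $j$-th barycentric coordinate of $T_\beta(x)$ equals $x_j/2^k + \sum_{t:\, i_t = j} 2^{-t}$. Whenever $j \in D(\beta)$ the second sum is strictly positive and does not depend on $x$, so no point of $K_\beta$ has vanishing $j$-th coordinate. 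If $D(\beta) = \{1,2,3\}$ this holds for every $j$, forcing $K_\beta \subset \mathrm{int}(K)$ and hence $\partial K_\beta \cap \partial K = \emptyset$.

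The ``in particular'' clause is immediate, since $|\beta| \leq 2$ forces $|D(\beta)| \leq 2$. The only steps that require verification are the closed-form expression for $T_\beta$ (a direct induction peeling off one $T_{i_t}$ at a time) and the invariance $T_i(L) \cup T_j(L) = L$ (immediate from the definition of $T_i$); I do not expect any genuine obstacle, as once the barycentric setup is in place the missing letter in $D(\beta)$ is precisely what allows a coordinate of $T_\beta(x)$ to vanish, which is exactly what lets $K_\beta$ reach $\partial K$.
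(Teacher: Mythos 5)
Your proof is correct, but there is nothing in the paper to compare it against: the paper never proves this claim, introducing it with ``In fact, by observation we have,'' and the equivalent neighbor criteria of Section 3.1 are likewise stated without justification. So your argument is not an alternative route but a rigorous substitute for an appeal to Figure 2. All three ingredients check out: (1) the reduction $\partial K_{\sigma }\cap \partial K_{\tau }=T_{\sigma }(\partial K\cap \partial K_{\beta })$ is valid because $T_{\sigma }$ is an injective similarity carrying $K$ onto $K_{\sigma }$ and $K_{\beta }$ onto $K_{\tau }$; (2) in the ``if'' direction, the invariance $T_{i}(L)\cup T_{j}(L)=L$ of the side $L=[a_{i},a_{j}]$ indeed gives $T_{\beta }(L)\subseteq L\subseteq \partial K$ with $T_{\beta }(L)$ a side of $K_{\beta }$ --- though you should always choose $i\neq j$ with $D(\beta )\subseteq \{i,j\}$, which is possible exactly when $|D(\beta )|\leq 2$; in your parenthetical degenerate case $i=j$ the set $L$ is a single point and $T_{\beta }(L)$ is a vertex of $K_{\beta }$, not ``one full side,'' so that case is best deleted rather than allowed; (3) in the ``only if'' direction, the closed form $T_{\beta }(x)=x/2^{k}+\sum_{t=1}^{k}2^{-t}a_{i_{t}}$ is right, and you should add the one-line check that the resulting coefficients sum to $1$, namely $2^{-k}+\sum_{t=1}^{k}2^{-t}=1$, since that (together with affine independence of $a_{1},a_{2},a_{3}$) is what makes them \emph{the} barycentric coordinates; granted this, every $j\in D(\beta )$ forces the $j$-th coordinate on $K_{\beta }$ to be at least $\sum_{t:\,i_{t}=j}2^{-t}>0$, so $D(\beta )=\{1,2,3\}$ places $K_{\beta }$ in the open interior of $K$, disjoint from $\partial K$. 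A side benefit of your method over the paper's ``by observation'': it is quantitative, showing $K_{\beta }$ stays at a definite positive distance from each side of $K$ indexed by a letter occurring in $\beta $, which is more than the claim requires.
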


For example, $123123$ and $1231$ are neighbors, but $123123$ and $123$ are
not.

For every $t,$ we denote $d_{t}(\sigma ,\tau )$ the geodesic distance on $%
V_{t}.$ Let%
\begin{equation*}
\bar{D}(t)=\frac{\sum\nolimits_{\sigma \neq \tau \in V_{t}}d_{t}(\sigma
,\tau )}{\#V_{t}(\#V_{t}-1)/2}
\end{equation*}%
be the average path length of the complex network $V_{t}.$

We can state our main result as follows.

\begin{theorem}
We have the asymptotic formula
\begin{equation}
\lim_{t\rightarrow \infty }\frac{\bar{D}(t)}{t}=\frac{4}{9}.  \label{a0}
\end{equation}
\end{theorem}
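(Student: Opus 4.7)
The plan is to combine the self-similar structure of $G_t$ with a renewal-theoretic analysis of address words. A direct consequence of Claim~1 is that for $\sigma\preceq\tau$ with $\tau=\sigma\beta$, the geodesic distance $d_t(\sigma,\tau)$ equals the minimum number $m(\beta)$ of factors in any decomposition $\beta=\beta_1\cdots\beta_m$ for which each $\beta_i$ uses at most two distinct letters. A standard exchange argument shows that the greedy left-to-right partition is optimal, so $m$ is a purely combinatorial statistic on $\{1,2,3\}^{*}$ that can be analysed by renewal theory.

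The first step is to compute the expected value of $m$ on a random word. A direct count gives that a uniform word of length $k\geq 1$ uses at most two distinct letters with probability $p_k=(3\cdot 2^k-3)/3^k$, so the greedy block length has mean $\mu=\sum_{k\ge 1}p_k=9/2$. The renewal theorem therefore gives $\sum_{|\sigma|=k}m(\sigma)=(2k/9)\cdot 3^k+o(k\cdot 3^k)$, and summing over $0\le k\le t$ yields
\begin{equation*}
R(t):=\sum_{\sigma\in V_t\setminus\{\emptyset\}}d_t(\emptyset,\sigma)\sim \frac{t\cdot 3^{t+1}}{9}.
\end{equation*}

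The second step is a self-similar recurrence. The partition $V_t=\{\emptyset\}\sqcup 1V_{t-1}\sqcup 2V_{t-1}\sqcup 3V_{t-1}$, together with a short check that intra-copy geodesics never leave their copy (any detour costs two extra steps with no in-copy gain), gives
\begin{equation*}
S(t):=\sum_{\sigma\ne\tau\in V_t}d_t(\sigma,\tau)=3S(t-1)+C(t)+R(t),
\end{equation*}
where $C(t)$ sums distances across pairs $(\sigma,\tau)$ lying in distinct copies. For $\sigma=i\sigma'$ and $\tau=j\tau'$ with $i\ne j$, the two natural geodesics go through the root $\emptyset$ or through the junction point $b_{ij}$ (whose addresses at depth $t$ are $ij^{t-1}$ and $ji^{t-1}$), and a case analysis from Claim~1 shows $d_t(\sigma,\tau)=m(\sigma')+m(\tau')+O(1)$. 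Summing with the renewal asymptotic gives $C(t)\sim t\cdot 3^{2t}/3$; solving the recurrence then yields $S(t)\sim t\cdot 3^{2t}/2$, and dividing by $\#V_t(\#V_t-1)/2\sim 3^{2t+2}/8$ produces $\bar{D}(t)/t\to 4/9$.

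The main obstacle is the cross-copy analysis: establishing $d_t(\sigma,\tau)=m(\sigma')+m(\tau')+O(1)$ with a constant uniform in $t$, and controlling the atypical pairs where the two candidate paths differ substantially. A tail bound on $m$ coming from the renewal variance estimate, together with the fact that such atypical pairs form only a vanishing fraction of all cross-copy pairs, should bound their contribution to $C(t)$ by $o(t\cdot 3^{2t})$, preserving the leading-order asymptotic.
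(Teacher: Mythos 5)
Your outline follows the same route as the paper's own proof: identify the distance to the root with a greedy two-letter block count, compute the mean block length $9/2$ and invoke the elementary renewal theorem (the paper's $\mathbb{E}(S)=\sum_{k\geq 2}k(2^k-2)/3^k=9/2$), then feed this into the self-similar decomposition $S(t)=3S(t-1)+C(t)+R(t)$, which is exactly the paper's $\pi_t=3\pi_{t-1}+\nu_t+\lambda_t$, with the cross-copy term dominating; all your constants ($R(t)\sim t3^{t+1}/9$, $C(t)\sim t3^{2t}/3$, $S(t)\sim t3^{2t}/2$) agree with the paper's estimates for $\lambda_t$, $\nu_t$, $\pi_t$. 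However, there is a genuine gap at your very first step. You assert that $d_t(\sigma,\tau)=m(\beta)$ is ``a direct consequence of Claim 1'' plus a standard exchange argument. The exchange argument only shows that the greedy partition minimizes the number of blocks \emph{among factorizations of} $\beta$, i.e.\ among paths that move monotonically through prefixes; it says nothing about paths in $G_t$ that visit words incomparable with $\sigma$ and $\tau$, or longer than $\tau$, or in other branches of the tree. Claim 1 does not even describe adjacency between incomparable words --- that requires the type-2 neighbor criterion ($\sigma'=i[j]^{k}$, $\tau'=j[i]^{k'}$) of Section 3.1. Proving the lower bound $d_t(\sigma,\emptyset)\geq\omega(\sigma)$ against \emph{arbitrary} paths is precisely the content of the paper's Proposition 1, whose proof rests on Lemma \ref{l:omega}: across any edge $\sigma\sim\tau$ of $G_t$ one has $\omega(\sigma)\geq\omega(\tau)-1$, so $\omega$ can drop by at most one per step along any path. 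Without this lemma (or an equivalent), your identification of graph distance with a combinatorial block statistic is unproved, and everything downstream rests on it.

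The second gap is the cross-copy lower bound. You claim $d_t(i\sigma',j\tau')=m(\sigma')+m(\tau')+O(1)$ ``by a case analysis from Claim 1'' and then treat the uniformity of the $O(1)$ as the main obstacle, to be handled by tail bounds on atypical pairs. The upper bound is indeed easy (route through the root or the junction vertex), but the lower bound again requires excluding shortcuts, and Claim 1 cannot do this. The paper gets $d_t(i\sigma',j\tau')\geq L_{t-1}(\sigma')+L_{t-1}(\tau')$ \emph{uniformly for all pairs} by the Jordan-curve/boundary-crossing argument behind Lemma \ref{l:double}: any path from deep inside the $i$-th copy to deep inside the $j$-th copy must first bring the small triangle to the boundary of $K_i$ and then penetrate to depth $|\tau'|$ inside $K_j$. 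With that uniform two-sided bound the per-pair error is $O(1)$, hence contributes $O(3^{2t})=o(t3^{2t})$ to $C(t)$, and no variance or ``atypical pair'' bookkeeping is needed at all. In short, your averaging and renewal arithmetic is sound and matches the paper, but both places where you appeal to Claim 1 actually need distance \emph{lower} bounds (the paper's Lemma \ref{l:omega} and the Jordan-curve estimate), and these are the missing ideas.
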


\begin{remark}
\label{R:6} Since $t\propto \ln (\#V_{t})$, Theorem $1$ implies that the
evolving networks $G_{t}$ have small average path length, namely $\bar{D}%
(t)\propto \ln (\#V_{t})$.
\end{remark}

%We can draw $G_{5}$ by using Pajek in Figure 3.

%\begin{figure}[tbph]
%\centering\includegraphics[width=0.5\textwidth]{6.eps} \vspace{-0.35cm}
%\caption{$G_{5}$}
%\end{figure}

The paper is organized as follows. In Section 2 we give notations and sketch of proof
for Theorem 1,consisting of four steps. In sections 3-6, we will provide details for
the four steps respectively. Our main techniques come from the
self-similarity and the renewal theorem.

\section{Sketch of proof for Theorem 1}

We will illustrate our following four steps needed to prove Theorem 1.

\textbf{Step 1.} We calculate the geodesic distance between a word and the
empty word.

Given a small solid triangle $\Delta ,$ we can find a maximal solid triangle
$\Delta ^{\prime }$ which contains $\Delta $ and their boundaries are
touching. Translating into the language of words, for a given word $\sigma
\neq \emptyset ,$ we can find a unique shortest word $f(\sigma )$ such that $%
f(\sigma )\prec \sigma $ and $f(\sigma )\sim \sigma .$ For a word $\sigma
=\tau _{2}\tau _{1},$ where $\tau _{1}$ is the maximal suffix with at
most two letters appearing, using Claim \ref{C:two} we have $f(\sigma )=\tau _{2}.$
Iterating $f$ again and again, we obtain a sequence $\sigma \sim f(\sigma
)\sim \cdots \sim f^{n-1}(\sigma )\sim f^{n}(\sigma )=\emptyset .$ Let
\begin{equation*}
\omega (\sigma )=\min \{n:f^{n}(\sigma )=\emptyset \}.
\end{equation*}%
In particular, we define $\omega (\emptyset )=0.$ For $\sigma
=112113112312=(112)(11311)(23)(12),$ we have $f(\sigma )=(112)(11311)(23),$ $%
f^{2}(\sigma )=(112)(11311)$, $f^{3}(\sigma )=(112)\ $and $f^{4}(\sigma
)=\emptyset .$ Then $\omega (\sigma )=4.$ We will prove in Section 3

\begin{proposition}
For $\sigma \in V_{t},$ we have $d_{t}(\sigma ,\emptyset )=\omega (\sigma ).$
\end{proposition}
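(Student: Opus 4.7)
The plan is to sandwich $d_t(\sigma,\emptyset)$ between $\omega(\sigma)$ and itself, by establishing a matching upper and lower bound. The upper bound $d_t(\sigma, \emptyset) \leq \omega(\sigma)$ is immediate from the explicit walk $\sigma \sim f(\sigma) \sim f^2(\sigma) \sim \cdots \sim f^{\omega(\sigma)}(\sigma) = \emptyset$: each consecutive pair is adjacent because the suffix stripped by $f$ has, by construction, at most two distinct letters, so Claim \ref{C:two} applies, and all intermediate vertices lie in $V_t$ since $|f^{i}(\sigma)| \leq |\sigma| \leq t$.

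For the lower bound I would first reinterpret $\omega(\sigma)$ combinatorially as the minimum $m$ for which $\sigma$ admits a concatenative decomposition $\sigma = \pi_1 \pi_2 \cdots \pi_m$ in which each block $\pi_i$ has at most two distinct letters. A short induction shows that iterating $f$ realizes this minimum: in any alternative parse, the rightmost block is a suffix of $\sigma$ with at most two distinct letters, hence no longer than the greedy suffix $\tau_1$ chosen by $f$, so one can either cut exactly at its boundary or absorb a suffix of the previous block, leaving a parse of $f(\sigma)$ with at most $m-1$ blocks. An essential corollary is monotonicity under extension: $\omega(\rho) \leq \omega(\rho\alpha)$ for every $\rho$ and $\alpha$, obtained by restricting an optimal parse of $\rho\alpha$ to the $\rho$-prefix, the truncated last block still having at most two distinct letters.

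With $\omega$ characterized this way, the lower bound reduces to showing that $\omega$ is $1$-Lipschitz on the edges of $G_t$, i.e.\ $|\omega(\sigma) - \omega(\tau)| \leq 1$ whenever $\sigma \sim \tau$; iterating this along any path then gives $\omega(\sigma) \leq d_t(\sigma,\emptyset)$. Two cases arise. When $\sigma \prec \tau$ with $\tau = \sigma\beta$, Claim \ref{C:two} yields that $\beta$ has at most two distinct letters, so $\omega(\tau) \leq \omega(\sigma)+1$ by appending $\beta$ as a new block, while $\omega(\sigma) \leq \omega(\tau)$ by monotonicity. When neither word is a prefix of the other, a geometric analysis of the adjacency condition $\partial K_\sigma \cap \partial K_\tau \neq \varnothing$ is required: writing the longest common prefix as $\rho$ with $\sigma = \rho i \sigma'$, $\tau = \rho j \tau'$, $i \neq j$, one has $K_\sigma \cap K_\tau \subseteq K_{\rho i} \cap K_{\rho j} = \{T_\rho((a_i+a_j)/2)\}$, and requiring this single corner point to lie in $K_\sigma$ forces $\sigma' = j^{k}$ for some $k \geq 0$, because $a_j \in K_\eta$ only when $\eta$ is a pure string of $j$'s (as $T_j$ is the unique IFS map fixing $a_j$); symmetrically $\tau' = i^{l}$. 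Hence both extensions past $\rho$ have at most two distinct letters, and the prefix case applied to $\rho \prec \sigma$ and $\rho \prec \tau$ places both $\omega(\sigma)$ and $\omega(\tau)$ in $\{\omega(\rho), \omega(\rho)+1\}$, delivering the Lipschitz bound. I expect the main obstacle to be precisely this geometric characterization of non-prefix adjacency, while the combinatorial lemmas on $\omega$ are routine once set up.
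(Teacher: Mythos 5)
Your proof is correct, and its top-level structure is the same as the paper's: the upper bound comes from the explicit walk $\sigma \sim f(\sigma )\sim \cdots \sim f^{\omega (\sigma )}(\sigma )=\emptyset $, and the lower bound from the fact that $\omega $ changes by at most $1$ across any edge of $G_{t}$, iterated along a geodesic to $\emptyset $ (this edge-Lipschitz statement is precisely the paper's Lemma \ref{l:omega}). Where you genuinely diverge is in how the Lipschitz property is proved. The paper takes the adjacency criteria of Section 3.1 as an observation, handles prefix-type edges with the prefix-monotonicity of $f$ (Claim \ref{C:2}), and disposes of the remaining edges $\sigma i[j]^{p}\sim \sigma j[i]^{q}$ via the exact identity $\omega (\sigma i[j]^{p})=\omega (\sigma j[i]^{q})=\omega (\sigma ij)$. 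You instead (i) characterize $\omega (\sigma )$ as the minimal number of blocks in a factorization of $\sigma $ into blocks each using at most two distinct letters, proved by a greedy-exchange induction, which makes both monotonicity under prefixes and the bound $\omega (\rho \beta )\leq \omega (\rho )+1$ (for $\beta $ with at most two distinct letters) immediate; and (ii) derive the non-prefix adjacency criterion geometrically, from $K_{\rho i}\cap K_{\rho j}=\{T_{\rho }((a_{i}+a_{j})/2)\}$ together with the fact that $a_{j}\in K_{\eta }$ forces $\eta =j^{k}$. What your route buys is self-containedness: you actually prove the neighbor criteria that the paper only asserts, and your minimal-parse description of $\omega $ is essentially the ``normal decomposition'' that the paper postpones to Section 6.1, so nothing in your argument rests on unproved observations. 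What the paper's route buys is brevity: granted the criteria and Claim \ref{C:2}, its Lemma \ref{l:omega} takes only a few lines, and it even yields the sharper conclusion that $\omega $ is constant across type-2 edges, though neither proof needs that extra precision.
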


In fact, this proposition shows that $d_{t}(\sigma ,\emptyset )$ is
independent of the choice of $t$ whenever $t\geq |\sigma |.$ In this case, we
also write $d(\sigma ,\emptyset ).$ Write
\begin{equation*}
L(\tau )=d(\tau ,\emptyset )-1\text{ for }\tau \neq \emptyset \text{ and }%
L(\emptyset )=0.
\end{equation*}%
Then $L(\tau )$ is independent of $t,$ in fact, $L(\tau )$ is the minimal
number of moves for $K_{\tau }$ to touch the boundary of $K.$

\medskip

\textbf{Step 2.} Given $m\geq 1,$ we consider the average geodesic
distance between the empty word and word of length $m$ and set
\begin{equation*}
\,\bar{\alpha}_{m}=\frac{\sum\nolimits_{|\sigma |=m}d(\sigma ,\emptyset )}{%
\#\{\sigma :|\sigma |=m\}}-1=\frac{\sum\nolimits_{|\sigma |=m}L(\sigma )}{%
\#\{\sigma :|\sigma |=m\}},
\end{equation*}%
and $\bar{\alpha}_{0}=0,$ we will obtain the limit property of $\bar{\alpha}%
_{m}/m$ as $m\rightarrow \infty$ in Section 4.

In fact, by the \textbf{Jordan curve theorem},\textbf{\ }we can obtain
\begin{equation}
L(\tau )+L(\sigma )\leq L(\tau \sigma )\leq L(\tau )+L(\sigma )+1.
\end{equation}%
From (2.1), we can verify $\{\bar{\alpha}_{m}\}_{m}$ is
superadditive which implies

\begin{proposition}
$\lim\limits_{m\rightarrow \infty }\bar{\alpha}_{m}/m$ $=\sup (\bar{\alpha}%
_{m}/m)<\infty .$
\end{proposition}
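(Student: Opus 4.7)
The plan is to derive a superadditive inequality for $\{\bar\alpha_m\}$ from the pointwise estimate (2.1), invoke Fekete's lemma to establish the limit and identify it with the supremum, and then use the companion upper estimate in (2.1) to show that this supremum is finite.

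First, I would upgrade the left inequality $L(\tau)+L(\sigma)\le L(\tau\sigma)$ to a statement about averages. Every word of length $n+m$ has a unique factorization $\rho=\tau\sigma$ with $|\tau|=n$ and $|\sigma|=m$, so summing over all such pairs gives
\begin{equation*}
\sum_{|\rho|=n+m} L(\rho) \;\ge\; \sum_{|\tau|=n}\sum_{|\sigma|=m}\bigl(L(\tau)+L(\sigma)\bigr) \;=\; 3^{m}\sum_{|\tau|=n}L(\tau)+3^{n}\sum_{|\sigma|=m}L(\sigma).
\end{equation*}
Dividing both sides by $3^{n+m}=\#\{\rho:|\rho|=n+m\}$ produces $\bar\alpha_{n+m}\ge\bar\alpha_n+\bar\alpha_m$, so $\{\bar\alpha_m\}$ is superadditive. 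The standard Fekete lemma for superadditive sequences then yields $\lim_{m\to\infty}\bar\alpha_m/m=\sup_{m\ge 1}\bar\alpha_m/m\in(0,+\infty]$, which is already half of the proposition.

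It remains to rule out the value $+\infty$. Running the same averaging scheme on the right half of (2.1) gives $\bar\alpha_{n+m}\le\bar\alpha_n+\bar\alpha_m+1$. Since every single letter $i$ satisfies $i\sim\emptyset$, we have $L(i)=0$ and hence $\bar\alpha_1=0$; induction on $m$ with $n=1$ then gives $\bar\alpha_m\le m-1$ for all $m\ge 1$, so $\bar\alpha_m/m<1$ and the supremum is finite. The main geometric work has already been invested in establishing (2.1) via the Jordan curve theorem; from there the argument reduces to routine bookkeeping on the $3^{n+m}$ pairs and a standard Fekete-type passage to the limit, with the explicit bound $\bar\alpha_m\le m-1$ precisely ruling out the \emph{a priori} possibility that the supremum diverges.
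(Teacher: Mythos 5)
Correct, and essentially the paper's own argument: you average the two-sided bound (2.1) over all $3^{n+m}$ factorizations of a word of length $n+m$ to obtain $\bar{\alpha}_{n}+\bar{\alpha}_{m}\leq \bar{\alpha}_{n+m}\leq \bar{\alpha}_{n}+\bar{\alpha}_{m}+1$, which is exactly the paper's inequality (\ref{hha}), and then invoke Fekete's superadditive lemma to identify the limit with the supremum. The finiteness step differs only cosmetically: you iterate the upper inequality with $n=1$ and the explicit value $\bar{\alpha}_{1}=0$ (valid, since $i\sim\emptyset$ gives $L(i)=0$) to get $\bar{\alpha}_{m}\leq m-1$, whereas the paper iterates with an arbitrary fixed block length $q$ to get $\limsup_{m\rightarrow\infty}\bar{\alpha}_{m}/m\leq \bar{\alpha}_{q}/q+1/q$; both are routine consequences of the same inequality.
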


Denote
\begin{equation*}
\alpha ^{\ast }=\sup (\bar{\alpha}_{m}/m)=\lim\limits_{m\rightarrow \infty }%
\bar{\alpha}_{m}/m.
\end{equation*}

\medskip

\textbf{Step 3.} We obtain the asymptotic formula of $\bar{D}(t)$ in
terms of $\alpha ^{\ast }.$

Using the similarity of Sierpinski gasket, e.g., for $i=1,2,3,$%
\begin{equation*}
d(i\sigma ,i\tau )=d(\sigma ,\tau ),
\end{equation*}%
and for $i\neq j,$%
\begin{equation*}
L(\sigma )+L(\tau )\leq d(i\sigma ,j\tau )\leq (L(\sigma )+1)+(L(\tau )+1)+1,
\end{equation*}%
we will prove the following in Section 5

\begin{proposition}
\label{P:asym}$\lim\limits_{t\rightarrow \infty }\frac{\bar{D}(t)}{t}%
=2\alpha ^{\ast }.$
\end{proposition}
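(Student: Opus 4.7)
The plan is to decompose the total pairwise distance
\[
S_t := \sum_{\{\sigma,\tau\}\subset V_t,\,\sigma\neq\tau} d_t(\sigma,\tau)
\]
by the first letters of $\sigma$ and $\tau$ and iterate. Writing $V_t\setminus\{\emptyset\} = V_{t,1}\sqcup V_{t,2}\sqcup V_{t,3}$ with $V_{t,i}=\{i\mu:\mu\in V_{t-1}\}$, three contributions arise. Pairs involving $\emptyset$ number $\#V_t-1$, each of distance $\leq t+1$, contributing only $O(t\cdot 3^t)$. Pairs $\{\sigma,\tau\}\subset V_{t,i}$ are in bijection with pairs in $V_{t-1}$, and by the self-similarity $d_t(i\sigma',i\tau')=d_{t-1}(\sigma',\tau')$ they contribute $3S_{t-1}$ total. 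Cross-block pairs $\sigma\in V_{t,i}$, $\tau\in V_{t,j}$ with $i\neq j$: applying the inequality $L(\sigma')+L(\tau')\leq d_t(i\sigma',j\tau')\leq L(\sigma')+L(\tau')+3$ and summing over the three unordered choices of $\{i,j\}$ produces $6\,\#V_{t-1}\,M_{t-1}+O(9^t)$, where $M_t:=\sum_{\sigma\in V_t}L(\sigma)=\sum_{m=0}^t 3^m\bar{\alpha}_m$. Altogether,
\[
S_t = 3 S_{t-1} + 6\,\#V_{t-1}\,M_{t-1} + O(9^t).
\]

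Next I would extract the asymptotics of $M_t$ from Proposition 2. Since $\bar{\alpha}_m/m\to\alpha^{\ast}$, splitting $\sum 3^m\bar{\alpha}_m$ at a cutoff $m_0$ and applying the closed form $\sum_{m=0}^t m\cdot 3^m=\tfrac{1}{4}((2t-1)3^{t+1}+3)$ gives $M_t\sim\tfrac{3\alpha^{\ast}}{2}\,t\cdot 3^t$. Using $\#V_{t-1}\sim\tfrac{1}{2}\cdot 3^t$, we get $6\,\#V_{t-1}\,M_{t-1}\sim\tfrac{3\alpha^{\ast}}{2}\,t\cdot 9^t$, so the recursion becomes
\[
S_t = 3 S_{t-1} + \tfrac{3\alpha^{\ast}}{2}\,t\cdot 9^t + o(t\cdot 9^t).
\]
Positing $S_t\sim C\,t\cdot 9^t$ and matching leading coefficients forces $C=C/3+\tfrac{3\alpha^{\ast}}{2}$, i.e.\ $C=\tfrac{9\alpha^{\ast}}{4}$. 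Dividing by $\#V_t(\#V_t-1)/2\sim\tfrac{9}{8}\cdot 9^t$ finally yields $\bar{D}(t)/t\to 2\alpha^{\ast}$, as claimed.

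The main obstacle is making the error analysis uniform. The convergence $\bar{\alpha}_m/m\to\alpha^{\ast}$ is not uniform in $m$, so converting it into $(\alpha^{\ast}-\epsilon)m\leq\bar{\alpha}_m\leq(\alpha^{\ast}+\epsilon)m$ for $m\geq m_0$ and transferring this bound to $M_t$ requires a careful $\epsilon$--$m_0$ splitting of the weighted sum. Likewise, when the recursion is unrolled, the accumulated $O(9^t)$ errors from $t$ iterations must be shown to remain $o(t\cdot 9^t)$; since $3^k\cdot 9^{t-k}/9^t=3^{-k}$, the geometric tail is controllable, but this bookkeeping is the only nontrivial step. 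Once the error terms are pinned down, the ansatz-matching that produces $C=9\alpha^{\ast}/4$, and hence $2\alpha^{\ast}$ in the limit, is essentially forced by the recursion.
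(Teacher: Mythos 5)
Your proposal is correct and follows essentially the same route as the paper: the paper likewise splits the total distance sum into same-first-letter pairs (contributing $3\pi_{t-1}$ by self-similarity), pairs involving $\emptyset$ (negligible, $O(t\,3^{t})$), and cross pairs sandwiched via $L(\sigma ^{\prime })+L(\tau ^{\prime })\leq d_{t}(i\sigma ^{\prime },j\tau ^{\prime })\leq L(\sigma ^{\prime })+L(\tau ^{\prime })+3$, then unrolls the resulting recursion after transferring $\bar{\alpha}_{m}/m\rightarrow \alpha ^{\ast }$ to the weighted averages. The only cosmetic difference is at the final step, where the paper evaluates the unrolled sums by the Stolz theorem while you use the closed form of $\sum_{m}m3^{m}$ and coefficient matching.
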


\begin{figure}[tbph]
\centering\includegraphics[width=0.5\textwidth]{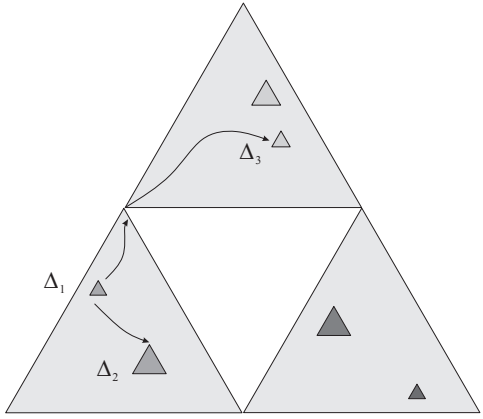} \vspace{-0.35cm}
\caption{The typical geodesic path between $\Delta _{1}$ and $\Delta _{3}$}
\end{figure}

As illustrated in Figure 3, Proposition \ref{P:asym} shows that the \emph{typical}
geodesic path is the geodesic path between $\Delta _{1}$ and $\Delta
_{3}$ whose first letters of codings are different. On the other hand, for
example the geodesic path between $\Delta _{1}$ and $\Delta _{2}$ with the same first letter
will give negligible contribution to $%
\bar{D}(t).$ Using $L(\sigma )+L(\tau )\leq d(i\sigma ,j\tau )\leq L(\sigma
)+L(\tau )+3$ with $i\neq j,$ we obtain that $d(i\sigma ,j\tau )\approx
L(\sigma )+L(\tau )$, ignoring the terms like $d(i\sigma ,i\tau ),$ we have
\begin{equation*}
\frac{\bar{D}(t)}{t}\approx 2\cdot \frac{1}{t}\cdot \frac{%
\sum\nolimits_{|\tau |\leq t-1}L(\tau )}{\#\{\tau :|\tau |\leq t-1\}},
\end{equation*}%
where $\frac{\sum\nolimits_{|\tau |\leq t-1}L(\tau )}{\#\{\tau :|\tau |\leq
t-1\}}$ is the average value of $L(\tau ).$ Using Stolz theorem, we have
\begin{eqnarray}
\lim\limits_{t\rightarrow \infty }\frac{2\sum\nolimits_{|\tau |\leq
t-1}L(\tau )}{t\cdot \#\{\tau :|\tau |\leq t-1\}} &=&\lim\limits_{t%
\rightarrow \infty }\frac{2\sum\nolimits_{|\tau |=t-1}L(\tau )}{3^{t-1}t+%
\frac{3^{t}}{6}-\frac{1}{2}}  \notag \\
&=&\lim\limits_{t\rightarrow \infty }\frac{2}{t}\cdot \frac{%
\sum\nolimits_{|\tau |=t-1}L(\tau )}{\#\{\tau :|\tau |=t-1\}}  \label{mmm} \\
&=&\lim\limits_{t\rightarrow \infty }\frac{2\bar{\alpha}_{t-1}}{t}=2\alpha
^{\ast }.  \notag
\end{eqnarray}

\bigskip

\textbf{Step 4.} Using the renewal theorem,
we will prove in Section 6

\begin{proposition}
$\alpha ^{\ast }=2/9.$
\end{proposition}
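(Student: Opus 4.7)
The plan is to view $\omega(\sigma)$ for a uniformly random word $\sigma$ of length $m$ as a renewal counting random variable, compute the expected inter-arrival time, and invoke the renewal theorem. Writing $E[\cdot]$ for expectation under the uniform measure on words of length $m$, one has $\bar\alpha_m = E[L(\sigma)] = E[\omega(\sigma)]-1$ for $m\geq 1$, so it suffices to show $E[\omega(\sigma)]/m \to 2/9$.

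The description of $f$ in Step 1 gives the block decomposition $\sigma = \beta_k\beta_{k-1}\cdots\beta_1$ with $\omega(\sigma) = k$, where $\beta_j$ is the maximal suffix of $f^{j-1}(\sigma)$ using at most two distinct letters. Reading a uniformly random $\sigma$ from right to left produces an i.i.d.\ uniform sequence $l_1,\ldots,l_m$ on $\{1,2,3\}$. I would extend this to an infinite i.i.d.\ uniform sequence and let $N$ denote the length of the maximal prefix of $l_1,l_2,\ldots$ using at most two distinct letters. Exploiting the permutation symmetry of the letters, the successive block lengths $N_1,N_2,\ldots$ form an i.i.d.\ sequence with common distribution that of $N$. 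Setting $S_n = N_1+\cdots+N_n$, one then verifies $\omega(\sigma) = \min\{n : S_n \geq m\}$, since the leftmost truncated piece of $\sigma$ is a suffix of a two-letter block and hence itself forms a single valid block.

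To compute $E[N]$, I would set $T = N+1 = \min\{n : |\{l_1,\ldots,l_n\}|=3\}$ and decompose $T = T_2 + (T-T_2)$, where $T_2 = \min\{n\geq 2 : l_n\neq l_1\}$ is the first time two distinct letters appear. Then $T_2-1$ is geometric with success probability $2/3$, so $E[T_2]=5/2$; similarly $T-T_2$ is geometric with success probability $1/3$ (after $T_2$ each fresh letter falls outside the current two-letter alphabet with probability $1/3$), so $E[T-T_2]=3$. Hence $E[N] = 9/2$, and the elementary renewal theorem yields $E[\omega(\sigma)]/m \to 1/E[N] = 2/9$, so $\alpha^\ast = 2/9$.

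The main obstacle I anticipate lies in the clean justification of the i.i.d.\ structure of $\{N_j\}$. The subtlety is that the terminator letter of $\beta_j$, inherited as the first letter of $\beta_{j+1}$, is in a deterministic relationship with $\beta_j$'s two-letter alphabet, so a naive appeal to the strong Markov property past time $N_j+1$ does not immediately apply. However, by the permutation symmetry of the uniform distribution on $\{1,2,3\}$, the conditional distribution of $N_{j+1}$ given its starting letter does not depend on which letter that is; combined with the independence of $(l_{N_j+2},l_{N_j+3},\ldots)$ from the past, this yields $N_{j+1}\perp(N_1,\ldots,N_j)$ with $N_{j+1}\stackrel{d}{=}N$, as required to invoke the renewal theorem.
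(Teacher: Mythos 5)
Your proof is correct, and it rests on the same probabilistic skeleton as the paper's Step 4: the greedy right-to-left decomposition into maximal two-letter blocks, the symmetry argument that neutralizes the forced first letter of each new block (your last paragraph is essentially the paper's own remark that $x_{p_{n}}$ is uniquely determined by the preceding block while $\mu$ is symmetric in the letters), the mean inter-arrival time $9/2$, and the elementary renewal theorem. But your execution of the crucial link between $\bar{\alpha}_{m}$ and the renewal process is genuinely different and much shorter. The paper never states your exact identity $\omega (\sigma )=\min \{n:S_{n}\geq m\}$; instead it conditions on the length $k$ of the rightmost block, enumerates normal decompositions through $W_{k_{1}\cdots k_{l}}$, $T(k)$, $M(k)$, $h(k)$, $e(k)$ and the identity $2M(k)=T(k)$, passes to cylinder measures $\mu (A_{k_{1}\cdots k_{q}})$, and proves only the two-sided sandwich of Lemma \ref{l: lim}, relating the $\liminf$ and $\limsup$ of $\frac{1}{t3^{t}}\sum_{|\sigma |=t}\omega (\sigma )$ to those of $\frac{1}{t}\sum_{k=2}^{t-1}\mathbb{E}(Y_{t-k})\frac{2^{k}-2}{3^{k}}$, finishing with a weighted-average limit argument. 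Your first-passage identity---justified by exactly the right observation, namely that the truncated leftmost piece is a sub-word of a two-letter block and hence a legitimate final block---collapses all of that machinery into a single application of the renewal theorem (note $\min \{n:S_{n}\geq m\}=Y_{m-1}+1$, so the counting-process form of the theorem applies verbatim). You also replace the paper's series computation $\mathbb{E}(S)=\sum_{k\geq 2}k(2^{k}-2)/3^{k}=9/2$ by the cleaner two-stage geometric computation $\mathbb{E}(T)=\frac{5}{2}+3$, and you dispense with the paper's device of appending the anchor letter $x_{0}=1$, which is equivalent by symmetry since both give the block-length law $(2^{k}-2)/3^{k}$. What the paper's heavier route buys is explicit finite-$t$ enumeration of words with a prescribed block profile (of independent combinatorial interest, and what makes its numerical table possible); what your route buys is brevity and transparency. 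One indexing slip to fix: the letters that are independent of the past should be $(l_{S_{j}+2},l_{S_{j}+3},\ldots )$ with $S_{j}=N_{1}+\cdots +N_{j}$, not $(l_{N_{j}+2},l_{N_{j}+3},\ldots )$.
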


By programming, we have
\begin{equation*}
\begin{tabular}{|l|l|l|l|l|l|l|}
\hline
$t=$ & $300$ & $400$ & $500$ & $600$ & $700$ & $800$ \\ \hline
$\bar{\alpha}_{t}/t=$ & $0.2207\cdots $ & $0.2211\cdots $ & $0.2213\cdots $
& $0.2214\cdots $ & $0.2215\cdots $ & $0.2216\cdots $ \\ \hline
\end{tabular}%
\end{equation*}%
which is in line with $\alpha ^{\ast }=2/9=0.2222\cdots.$

In fact, suppose $\Sigma =\{\cdots x_{2}x_{1}:$ $x_{i}=1,2$ or $3$ for all $%
t\}$ is composed of infinite words with letters in $\{1,2,3\}.$ Then we have
a natural mass distribution $\mu $ on $\Sigma $ such that for any word $%
\sigma $\ of length $k,$
\begin{equation*}
\mu (\{\cdots x_{k}\cdots x_{1}:x_{k}\cdots x_{1}=\sigma \})=1/3^{k}.
\end{equation*}%
For any word $\sigma ,$ let $\#(\sigma )$ denote the cardinality of letters
appearing in word $\sigma .$ For $\mu $-almost all $x,$ let
\begin{equation*}
S(\cdots x_{p}x_{p-1}\cdots x_{1})=p\text{ if }\#(x_{p-1}\cdots x_{1}1)=2%
\text{ and }\#(x_{p}x_{p-1}\cdots x_{1}1)=3.
\end{equation*}%
Then $\mathbb{E}(S)=\sum_{k=2}^{\infty }k\cdot \mu
\{x:S(x)=k\}=\sum_{k=2}^{\infty }k\cdot (2^{k}-2)/3^{k}=9/2<\infty .$

For $\mu $-almost all $x=\cdots x_{2}x_{1},$ suppose $x_{0}=1$ and $p_{0}=0$
and there is an infinite sequence $\{p_{n}\}_{n\geq 0}$ of integers such that
$p_{n+1}>p_{n}$ for all $n$ and
\begin{equation*}
x1=\cdots x_{p_{3}}\cdots x_{p_{2}}\cdots x_{p_{1}}\cdots x_{1}x_{0}
\end{equation*}%
satisfying $\#(x_{p_{n}}\cdots x_{p_{n-1}})=3$ and $\#(x_{(p_{n}-1)}\cdots
x_{p_{n-1}})=2$ for all $n.$ We then let
\begin{equation*}
S_{i}(x)=p_{i}-p_{i-1}\text{ for any }i\geq 1.
\end{equation*}%
Since $x_{p_{n}}$ is uniquely determined by word $x_{(p_{n}-1)}\cdots
x_{p_{n-1}},$ and $\mu $ is symmetric for letters in $\{1,2,3\},$ we find
that $\{S_{i}\}_{i}$ is a sequence of positive independent identically
distributed random variables with $S_{1}=S.$ For example, for $x=\cdots
321223121,$ then $x1=\cdots 3(21)(223)(1211)$ and $S_{1}(x)=S(x)=4,$ $%
S_{2}(x)=3$, $S_{3}(x)=2,\cdots .$

Let $J_{n}=S_{1}+\cdots +S_{n}$ and $Y_{t}=\sup \{n:J_{n}\leq t\}.$ Then $%
J_{n}=p_{n}$ and $Y_{t}=\max \{n: p_{n}\leq t\}.$ By the elementary renewal theorem, we
have
\begin{equation*}
\frac{\mathbb{E}(Y_{t})}{t}\rightarrow \frac{1}{\mathbb{E}(S)}=\frac{2}{9}%
=0.222\cdots .
\end{equation*}%
Using the following estimates (Lemma \ref{l: lim} in Section 6)
\begin{eqnarray*}
\liminf\limits_{t\rightarrow \infty }\frac{\sum\nolimits_{|\sigma
|=t}d(\sigma ,\emptyset )}{t3^{t}} &\geq &\liminf\limits_{t\rightarrow
\infty }\frac{\sum\nolimits_{k=2}^{t-1}\mathbb{E}(Y_{t-k})\frac{2^{k}-2}{%
3^{k}}}{t}, \\
\limsup_{t\rightarrow \infty }\frac{\sum\nolimits_{|\sigma |=t}d(\sigma
,\emptyset )}{t3^{t}} &\leq &\limsup_{t\rightarrow \infty }\frac{%
\sum\nolimits_{k=2}^{t-1}\mathbb{E}(Y_{t-k})\frac{2^{k}-2}{3^{k}}}{t},
\end{eqnarray*}%
we can prove that $\alpha ^{\ast }=\lim\limits_{t\rightarrow \infty }\frac{%
\sum\nolimits_{|\sigma |=t}d(\sigma ,\emptyset )}{t3^{t}}=\lim\limits_{t%
\rightarrow \infty }\frac{\mathbb{E}(Y_{t})}{t}=2/9.$

\medskip Theorem 1 follows from Propositions 3 and 4.

\bigskip

\section{Basic formulas on geodesic distance}

\subsection{\textbf{Criteria of }neighbor}

$\ $

Given distinct words $\sigma $ and $\tau $ with $|\sigma |,|\tau |\leq t,$
we give the following \textbf{criteria} to test whether they are neighbors
or not. At first, we delete the common prefix of $\sigma $ and $\tau ,$ say $%
\sigma =\beta \sigma ^{\prime }$ and $\tau =\beta \tau ^{\prime }$ where the
first letters of $\sigma ^{\prime }$ and $\tau ^{\prime }$ are different. We
can distinguish two cases: \newline
\textbf{Case 1}. If one of $\sigma ^{\prime }$ and $\tau ^{\prime }$ is the
empty word, say $\sigma ^{\prime }=\emptyset ,$ then $\sigma $ and $\tau $%
\textbf{\ }are not neighbors if and only if every letter $i\in \{1,2,3\}$
appears in the word $\tau ^{\prime }$. \newline
\textbf{Case 2}. If neither $\sigma ^{\prime }$ nor $\tau ^{\prime }$ is the
empty word, say $i,j$ the first letters of $\sigma ^{\prime }$ and $\tau
^{\prime }$ respectively, then $\sigma $ and $\tau $\textbf{\ }are neighbors
if and only if
\begin{equation*}
\sigma ^{\prime }=i[j]^{k}\text{ and }\tau ^{\prime }=j[i]^{k^{\prime }}%
\text{ with }k,k^{\prime }\geq 0.
\end{equation*}%
We say that $\sigma $ and $\tau $ are neighbors of type $1$ or $2$ according
to Case 1 or 2 respectively.

\subsection{Estimates on distance}

$\ $

The first lemma show the \emph{self-similarity} of geodesic distance.

\begin{lemma}
\label{l:3}If $\sigma =i\sigma ^{\prime }$ and $\tau =i\tau ^{\prime }$ with
$i\in \{1,2,3\},$ then
\begin{equation*}
d_{t}(\sigma ,\tau )=d_{t}(i\sigma ^{\prime },i\tau ^{\prime
})=d_{t-1}(\sigma ^{\prime },\tau ^{\prime }).
\end{equation*}%
Consequently, given any word $i_{1}\cdots i_{k},$%
\begin{equation*}
d_{t}(i_{1}\cdots i_{k}\sigma ^{\prime },i_{1}\cdots i_{k}\tau ^{\prime
})=d_{t-k}(\sigma ^{\prime },\tau ^{\prime })\text{ for all }\sigma ^{\prime
},\tau ^{\prime }.
\end{equation*}
\end{lemma}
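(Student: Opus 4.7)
The plan is to prove the first identity $d_t(i\sigma', i\tau') = d_{t-1}(\sigma', \tau')$ by two opposite inequalities, after which the iterated form follows by a straightforward induction on the length of the common prefix.

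For the upper bound $d_t(i\sigma', i\tau') \leq d_{t-1}(\sigma', \tau')$, I would check that the map $\mu \mapsto i\mu$ carries edges of $G_{t-1}$ to edges of $G_t$. If $\mu, \mu' \in V_{t-1}$ share common prefix $\beta$ with residual suffixes falling under Case 1 or Case 2 of Section 3.1, then $i\mu$ and $i\mu'$ share common prefix $i\beta$ with the same residual suffixes, so precisely the same case applies. Hence any geodesic in $G_{t-1}$ from $\sigma'$ to $\tau'$ lifts to a path of equal length in $G_t$ from $i\sigma'$ to $i\tau'$.

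For the lower bound $d_t(i\sigma', i\tau') \geq d_{t-1}(\sigma', \tau')$, I plan to introduce a projection $\pi \colon V_t \to V_{t-1}$ defined by $\pi(i\mu) = \mu$ when the word begins with $i$, and $\pi(\nu) = \emptyset$ otherwise (including the cases $\nu = \emptyset$ and $\nu$ starting with some $j \neq i$). The core claim is that every edge $\nu \sim \nu'$ of $G_t$ satisfies either $\pi(\nu) = \pi(\nu')$ or $\pi(\nu) \sim \pi(\nu')$ in $G_{t-1}$. Granting this, the image under $\pi$ of any geodesic from $i\sigma'$ to $i\tau'$ is a walk in $G_{t-1}$ from $\sigma'$ to $\tau'$ of no greater length, which delivers the inequality.

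The substantive work is the case analysis for $\pi$. When both endpoints of an edge begin with $i$, the upper-bound argument already shows their projections are neighbors; when neither does, both project to $\emptyset$. The interesting case is when exactly one endpoint, say $\nu = i\mu$, begins with $i$: the criteria of Section 3.1 are very restrictive on such cross-subtree edges, forcing either $\nu' = \emptyset$ with $i\mu$ having at most two distinct letters (so $\mu$ is $\emptyset$ or a neighbor of $\emptyset$ in $G_{t-1}$ by Case 1), or $\nu' = j[i]^{k'}$ with $j \neq i$ and $i\mu = i[j]^k$ (so $\mu = [j]^k$ is again $\emptyset$ or a neighbor of $\emptyset$). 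The main obstacle I anticipate is just the bookkeeping of these subcases together with the degenerate possibility $\mu = \emptyset$ (i.e.\ $\nu = i$ itself), but no individual verification is genuinely hard precisely because the neighbor criteria restrict cross-subtree edges so tightly.
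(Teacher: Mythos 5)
Your proposal is correct and is essentially the paper's own argument: your projection $\pi$ (strip the leading $i$, send everything else to $\emptyset$) is exactly the paper's retraction $g$ (which fixes words starting with $i$ and sends all other words to $i$) followed by deletion of the first letter, and your lifting map $\mu \mapsto i\mu$ for the upper bound is the paper's second inequality verbatim. The only difference is that you spell out the edge-preservation case analysis for $\pi$ via the neighbor criteria, which the paper asserts without detail.
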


\begin{proof}
Fix the letter $i,$ we define $g:V_{t}\rightarrow V_{t}$ by
\begin{equation*}
g(\beta )=\left\{
\begin{array}{ll}
\beta & \text{if }i\preceq \beta ,\text{ } \\
i & \text{otherwise}.%
\end{array}%
\right.
\end{equation*}%
If we give a shortest sequence $(\sigma =)\sigma ^{1}\sim \sigma ^{2}\sim
\cdots \sim \sigma ^{k}(=\tau )$ in $G_{t}$, then $\sigma =g(\sigma
^{1})\simeq g(\sigma ^{2})\simeq \cdots \simeq g(\sigma ^{k})=\tau $ is also
a sequence and all word $\{g(\sigma ^{i})\}_{i=1}^{k}$ have the same first
letter $i.$ Deleting the first letter $i$, we get a ($\simeq $)-sequence
from $\sigma ^{\prime }$ to $\tau ^{\prime }$ in $G_{t-1}.$ Hence $%
d_{t}(\sigma ,\tau )\geq d_{t-1}(\sigma ^{\prime },\tau ^{\prime }).$

On the other hand, for given ($\sim $)-sequence from $\sigma ^{\prime }$ to $%
\tau ^{\prime }$ in $G_{t-1},$ by adding the first letter $i,$ we obtain a ($%
\sim $)-sequence $\sigma $ to $\tau $ in $G_{t}$ which implies $d_{t}(\sigma
,\tau )\leq d_{t-1}(\sigma ^{\prime },\tau ^{\prime }).$ The lemma follows.
\end{proof}

Given $t,$ let $L_{t}(\sigma )=d_{t}(\sigma ,\emptyset )-1$ for $\sigma \neq
\emptyset .$ The second lemma shows that $L_{t}(\sigma )$ is independent of $%
t$ whenever $t\geq |\sigma |.$ We can write $L(\sigma ).$

\begin{lemma}
\label{l: 1}For $\sigma ,\tau \in V_{k},$ $d_{t}(\sigma ,\tau )=d_{k}(\sigma
,\tau ).$ As a result, $L_{t}(\sigma )=L_{|\sigma |}(\sigma ).$
\end{lemma}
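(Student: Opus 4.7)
The plan is to establish the two-sided inequality $d_t(\sigma,\tau)=d_k(\sigma,\tau)$ for all $\sigma,\tau\in V_k$ and all $t\ge k$. The easy direction is $d_t(\sigma,\tau)\le d_k(\sigma,\tau)$: since $V_k\subseteq V_t$ and the adjacency relation depends only on the geometry of the triangles $K_\sigma,K_\tau$ (not on the ambient $t$), any $(\sim)$-path in $G_k$ is also a $(\sim)$-path in $G_t$.

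For the reverse inequality, I would proceed by induction on $t$, the base case $t=k$ being trivial. The inductive step reduces to the following claim: for any $\sigma,\tau\in V_{t-1}$, every path in $G_t$ joining them can be converted into a path in $G_{t-1}$ of no greater length. To achieve this, take a shortest path $\sigma=\sigma^{1}\sim\sigma^{2}\sim\cdots\sim\sigma^{n}=\tau$ in $G_t$, and replace each $\sigma^{i}$ with $|\sigma^{i}|=t$ by its father $\hat\sigma^{i}$ (the length-$(t-1)$ prefix). After collapsing consecutive duplicates, one obtains a walk whose vertices all lie in $V_{t-1}$; it then suffices to verify that adjacency (or equality) between consecutive vertices is preserved.

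The main obstacle, and the one place where the neighbor criteria of Subsection 3.1 are essential, is this adjacency check. The natural case split is:
\begin{itemize}
\item[(i)] $|\sigma^{i}|=t$ and $|\sigma^{i-1}|<t$. Either $\sigma^{i-1}\prec\sigma^{i}$ (Case 1 of Section 3.1), in which case $\sigma^{i}=\sigma^{i-1}\beta$ with $\beta$ containing at most two distinct letters, and $\hat\sigma^{i}$ is either equal to $\sigma^{i-1}$ or is $\sigma^{i-1}\beta'$ with $\beta'$ also of at most two letters, hence still $\sim\sigma^{i-1}$ by Claim~\ref{C:two}; or Case 2 applies with $\sigma^{i}=\gamma i[j]^{a}$, $\sigma^{i-1}=\gamma j[i]^{b}$, and dropping the last letter of $\sigma^{i}$ either preserves the Case 2 alternating pattern or reduces to Case 1, yielding $\hat\sigma^{i}\sim\sigma^{i-1}$ in both subcases.
\item[(ii)] $|\sigma^{i}|=|\sigma^{i-1}|=t$. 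The only possibility is Case 2 with a common prefix $\gamma$ and $\sigma^{i-1}=\gamma i[j]^{a}$, $\sigma^{i}=\gamma j[i]^{a}$ (the exponents must match since both words have length $t$). Then $\hat\sigma^{i-1}=\gamma i[j]^{a-1}$ and $\hat\sigma^{i}=\gamma j[i]^{a-1}$ still satisfy the Case 2 criterion, so $\hat\sigma^{i-1}\sim\hat\sigma^{i}$; when $a=0$, they collapse to the common vertex $\gamma$.
\end{itemize}
Performing all such replacements simultaneously and removing duplicates yields a walk in $G_{t-1}$ from $\sigma$ to $\tau$ of length at most $n$, hence $d_{t-1}(\sigma,\tau)\le d_t(\sigma,\tau)$.

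Iterating the inductive step from $t$ down to $k$ gives $d_t(\sigma,\tau)=d_k(\sigma,\tau)$ for every $\sigma,\tau\in V_k$. For the final statement, apply this with $\tau=\emptyset$ and $k=|\sigma|$: both $\sigma$ and $\emptyset$ lie in $V_{|\sigma|}$, so $d_t(\sigma,\emptyset)=d_{|\sigma|}(\sigma,\emptyset)$ whenever $t\ge|\sigma|$, and subtracting $1$ yields $L_t(\sigma)=L_{|\sigma|}(\sigma)$.
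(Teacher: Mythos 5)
Your proof is correct and takes essentially the same approach as the paper: the paper defines the truncation map $h\colon V_{t}\rightarrow V_{k}$ (cutting each word down to its first $k$ letters), pushes a shortest path in $G_{t}$ through $h$, and invokes the neighbor criteria to conclude that adjacent vertices map to adjacent-or-equal vertices, which is exactly your father-map argument iterated one letter at a time. Your cases (i)--(ii) simply make explicit the adjacency check that the paper compresses into the phrase ``by criteria of neighbor.''
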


\begin{proof}
Fix $k\leq t.$ Let $h:V_{t}\rightarrow V_{k}$ be defined by
\begin{equation*}
h(\beta )=\left\{
\begin{array}{ll}
\beta & \text{if }|\beta |\leq k,\text{ } \\
i_{1}\cdots i_{k} & \text{if }|\beta |>k\text{ and }\beta =i_{1}\cdots
i_{k}\cdots i_{|\beta |}.%
\end{array}%
\right.
\end{equation*}%
Given $\sigma ,\tau \in V_{k},$ if we give a shortest sequence $(\sigma
=)\sigma ^{1}\sim \sigma ^{2}\sim \cdots \sim \sigma ^{k}(=\tau )$ in $G_{t}$%
, by criteria of neighbor we obtain that $\sigma =h(\sigma ^{1})\simeq
h(\sigma ^{2})\simeq \cdots \simeq h(\sigma ^{k})=\tau $ is also a ($\simeq
) $-sequence and all words in $V_{k}.$ Therefore, $d_{t}(\sigma ,\tau )\geq
d_{k}(\sigma ,\tau ).$ On the other hand, any ($\sim $)-sequence from $%
\sigma $ to $\tau $ in $G_{k}$ is also a ($\sim $)-sequence in $G_{k},$ that
means $d_{t}(\sigma ,\tau )\leq d_{k}(\sigma ,\tau ).$ The lemma follows.
\end{proof}

By the \emph{Jordan curve theorem}, when a point\ in the \emph{interior}
moves to the \emph{exterior}, it must touch the \emph{boundary}. Therefore,
we have the following

\begin{lemma}
\label{l:double}For any word $\tau \sigma $ with $\tau ,\sigma \neq
\emptyset ,$ we have
\begin{equation*}
L(\tau )+L(\sigma )\leq L(\tau \sigma )\leq L(\tau )+L(\sigma )+1.
\end{equation*}
\end{lemma}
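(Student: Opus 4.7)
The plan is to prove the two inequalities separately. The upper bound falls out quickly from self-similarity, while the lower bound is where the Jordan curve theorem, via the neighbor criteria of Section~3.1, does the essential work.

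For the upper bound $L(\tau\sigma)\le L(\tau)+L(\sigma)+1$, I would concatenate two shortest paths. Let $\sigma=\sigma_0\sim\sigma_1\sim\cdots\sim\sigma_m=\emptyset$ be a shortest path, so $m=L(\sigma)+1$. Prepending the letters of $\tau$ and applying Lemma~\ref{l:3} yields a valid path $\tau\sigma=\tau\sigma_0\sim\cdots\sim\tau\sigma_m=\tau$ of the same length. Concatenating with a shortest path from $\tau$ to $\emptyset$ of length $L(\tau)+1$ gives $d(\tau\sigma,\emptyset)\le L(\sigma)+L(\tau)+2$, which is the upper bound.

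For the lower bound $L(\tau\sigma)\ge L(\tau)+L(\sigma)$, fix a shortest path $\tau\sigma=\beta_0\sim\beta_1\sim\cdots\sim\beta_n=\emptyset$ of length $n=L(\tau\sigma)+1$ and set
\[
i^{*}\;=\;\min\{\,i\ge 1:\tau\not\preceq\beta_i\,\},
\]
the first index at which the path exits $K_\tau$ (such $i^{*}$ exists because $\tau\ne\emptyset$). Write $\beta_{i^{*}-1}=\tau\gamma_{i^{*}-1}$. The Jordan curve theorem forces the step $\beta_{i^{*}-1}\sim\beta_{i^{*}}$ to cross $\partial K_\tau$, and I would translate this into two symbolic assertions by applying the neighbor criteria (Case~1 or Case~2) to this pair:
\[
\text{(i)}\ \ d(\gamma_{i^{*}-1},\emptyset)\le 1,\qquad \text{(ii)}\ \ \tau\sim\beta_{i^{*}}.
\]
Given (i) and (ii), the initial segment $\beta_0\sim\cdots\sim\beta_{i^{*}-1}$ stays inside $K_\tau$, so iterating Lemma~\ref{l:3} produces a path $\sigma\sim\cdots\sim\gamma_{i^{*}-1}$ of length $i^{*}-1$, giving $d(\sigma,\gamma_{i^{*}-1})\le i^{*}-1$. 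The triangle inequality then yields $L(\sigma)+1=d(\sigma,\emptyset)\le i^{*}$ and $L(\tau)+1=d(\tau,\emptyset)\le 1+(n-i^{*})$; adding gives $n\ge L(\sigma)+L(\tau)+1$, i.e., the desired lower bound.

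The main obstacle is the symbolic case analysis establishing (i) and (ii). In Case~1, $\beta_{i^{*}}$ is a strict prefix of $\tau$; writing $\tau=\beta_{i^{*}}\xi$ with $\xi\ne\emptyset$, the Case~1 criterion forces the suffix $\xi\gamma_{i^{*}-1}$ of $\beta_{i^{*}-1}$ after $\beta_{i^{*}}$ to have at most two distinct letters, so both $\xi$ and $\gamma_{i^{*}-1}$ have at most two letters, yielding (i) and (ii) via Claim~\ref{C:two}. In Case~2 the common prefix $\alpha$ of $\beta_{i^{*}-1}$ and $\beta_{i^{*}}$ is a strict prefix of $\tau$ with $\tau=\alpha\,i\cdots$ and $\beta_{i^{*}}=\alpha\,j\cdots$ for distinct $i\ne j$; the imposed patterns force $\gamma_{i^{*}-1}=[j]^{m}$ for some $m\ge 0$ and the tail of $\tau$ after $\alpha$ to equal $i[j]^{|\tau|-|\alpha|-1}$, matching the Case~2 pattern against $\beta_{i^{*}}=\alpha\,j[i]^{l'}$ and exhibiting both (i) and (ii). Without this verification, the argument only gives $L(\tau\sigma)\ge L(\sigma)+L(\tau)-1$, one short of the required bound.
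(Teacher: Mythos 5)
Your proof is correct, and it is worth comparing with the paper's, because while the upper bound coincides (the paper phrases your path concatenation as the triangle inequality $d_{t}(\tau\sigma,\emptyset)\le d_{t}(\tau\sigma,\tau)+d_{t}(\tau,\emptyset)$ together with Lemma \ref{l:3}), your lower bound is executed quite differently. The paper's lower-bound argument is geometric and terse: by self-similarity, $K_{\tau\sigma}$ needs at least $L(\sigma)$ moves to touch $\partial K_{\tau}$, and then the Jordan curve theorem is invoked to conclude that at least $L(\sigma)+L(\tau)$ moves are needed in total --- the crucial point, namely why the walk still needs $L(\tau)$ further moves \emph{after} it first leaves $K_{\tau}$, is left implicit, since nothing is said about which vertex the walk occupies at the moment of exit. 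Your first-exit decomposition supplies exactly this bookkeeping: splitting a geodesic $\tau\sigma=\beta_{0}\sim\cdots\sim\beta_{n}=\emptyset$ at the first index $i^{*}$ where $\beta_{i^{*}}$ no longer has $\tau$ as a prefix, and proving via the type-1/type-2 neighbor criteria that (i) $d(\gamma_{i^{*}-1},\emptyset)\le 1$ and (ii) $\beta_{i^{*}}\sim\tau$, converts the topological appeal into a purely symbolic argument; the two counts $i^{*}\ge L(\sigma)+1$ and $n-i^{*}\ge L(\tau)$ then add up to the claim. Your case analysis is sound, including the orientation facts you state without proof: in Case 1, $\beta_{i^{*}}$ must be a strict prefix of $\tau$ (it cannot extend $\beta_{i^{*}-1}$, since any extension would again have $\tau$ as a prefix, contradicting the choice of $i^{*}$), and in Case 2 the same reasoning forces the common prefix $\alpha$ to be a strict prefix of $\tau$. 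What the paper's version buys is brevity and geometric transparency; what yours buys is a self-contained combinatorial proof that never leaves the word model, and assertions (i)--(ii) are precisely what the paper's Jordan-curve appeal tacitly requires. One notational caution if this is to be read alongside the paper: there the prefix order is written in the reverse direction, so that ``$\tau$ is a prefix of $\beta_{i}$'' is denoted $\beta_{i}\prec\tau$; your defining condition for $i^{*}$ should accordingly be written $\beta_{i}\not\preceq\tau$ rather than $\tau\not\preceq\beta_{i}$.
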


\begin{proof}
In fact, using triangle inequality and Lemmas \ref{l:3} and \ref{l: 1}, we have%
\begin{eqnarray*}
L(\tau \sigma )=d_{t}(\tau \sigma ,\emptyset )-1 &\leq &d_{t}(\tau \sigma
,\tau )+d_{t}(\tau ,\emptyset )-1 \\
&=&d_{t-|\tau |}(\sigma ,\emptyset )+d_{t}(\tau ,\emptyset )-1 \\
&=&L(\sigma )+L(\tau )+1.
\end{eqnarray*}

On the other hand, using the self-similarity in Lemma \ref{l:3}, the minimal
number of moves for $K_{\tau \sigma }$ to touch the boundary of $K_{\tau }$
is $L(\sigma ),$ and $L(\tau )$ is the minimal number of moves for $K_{\tau
} $ to touch the boundary of $K,$ by the Jordan curve theorem, there are at
least $L(\sigma )+L(\tau )$ moves for $K_{\tau \sigma }$ to touch the
boundary of $K,$ that means $L(\tau )+L(\sigma )\leq L(\tau \sigma ).$
\end{proof}

\subsection{Proof of Proposition 1}

$\ $

Suppose $\omega $ and $f$ are defined as in Section 2. By the definition of $%
f,$ if $\tau ^{\prime }\preceq \sigma ^{\prime },$ then we have $f(\tau
^{\prime })\preceq f(\sigma ^{\prime }).$ Therefore we have

\begin{claim}
\label{C:2}If $f(\sigma )\preceq f(\tau )\preceq \sigma ,$ then $%
f^{k}(\sigma )\preceq f^{k}(\tau )\preceq f^{k-1}(\sigma )$ for all $k\geq
0. $ As a result, $|\omega (\sigma )-\omega (\tau )|\leq 1.$
\end{claim}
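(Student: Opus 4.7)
The plan is to prove both assertions of the Claim by a straightforward induction on $k$, leveraging the monotonicity property of $f$ asserted immediately before the Claim: if $\tau ^{\prime }\preceq \sigma ^{\prime }$, then $f(\tau ^{\prime })\preceq f(\sigma ^{\prime })$.

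For the chain $f^{k}(\sigma )\preceq f^{k}(\tau )\preceq f^{k-1}(\sigma )$, I would induct on $k\geq 1$. The base case $k=1$ is precisely the hypothesis of the Claim. For the inductive step, I would apply the monotonicity of $f$ separately to each of the two prefix inequalities at level $k$: from $f^{k}(\sigma )\preceq f^{k}(\tau )$ one obtains $f^{k+1}(\sigma )\preceq f^{k+1}(\tau )$, and from $f^{k}(\tau )\preceq f^{k-1}(\sigma )$ one obtains $f^{k+1}(\tau )\preceq f^{k}(\sigma )$; concatenating the two gives the chain at level $k+1$.

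For the consequence $|\omega (\sigma )-\omega (\tau )|\leq 1$, I would read off a bound from each side of the chain. Setting $n=\omega (\sigma )$, the chain at $k=n+1$ yields $f^{n+1}(\tau )\preceq f^{n}(\sigma )=\emptyset $; since the only word having $\emptyset $ as a prefix is $\emptyset $ itself, this forces $f^{n+1}(\tau )=\emptyset $, hence $\omega (\tau )\leq \omega (\sigma )+1$. Conversely, setting $m=\omega (\tau )$ and invoking the chain at $k=m$ gives $f^{m}(\sigma )\preceq f^{m}(\tau )=\emptyset $, so $f^{m}(\sigma )=\emptyset $ and therefore $\omega (\sigma )\leq \omega (\tau )$. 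Combining the two bounds produces $\omega (\sigma )\leq \omega (\tau )\leq \omega (\sigma )+1$, which is stronger than, and in particular implies, the desired inequality.

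I do not expect a substantive obstacle: the Claim is essentially a formal consequence of the prefix-preserving property of $f$. The only mild subtlety is the terminal behaviour when an iterate becomes $\emptyset $, which can be handled either by the convention $f(\emptyset )=\emptyset $ or by observing that once any term in the chain reaches the empty word the monotone inheritance still propagates trivially, so no extra case analysis is required.
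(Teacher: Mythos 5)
Your proof is correct and takes essentially the same route as the paper: the paper treats the Claim as an immediate consequence of the prefix-monotonicity of $f$ stated just before it, and your induction on $k$ simply makes that inheritance explicit, with the bound $\omega (\sigma )\leq \omega (\tau )\leq \omega (\sigma )+1$ read off from the chain exactly as intended. One wording slip: to force $f^{n+1}(\tau )=\emptyset $ from $f^{n+1}(\tau )\preceq \emptyset $ the correct justification is that the only \emph{prefix} of $\emptyset $ is $\emptyset $ itself (every word has $\emptyset $ as a prefix, so your phrasing inverts the relation), but the inference you actually apply is the right one, so this does not affect the argument.
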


\begin{example}
\label{E:1}Let $i=1$, $j=3$ and $\sigma =\beta 321211$ for some word $\beta
, $ we have $f(\sigma i)=\beta 3$ and $f(\sigma j)=\beta 3212.$ Then
\begin{equation*}
f(\sigma i)\preceq f(\sigma j)\preceq \sigma i,
\end{equation*}%
and thus $|\omega (\sigma i)-\omega (\sigma j)|\leq 1.$ For $\beta =132$ and
$\beta ^{\prime }=\beta 1211$ with $\beta \sim \beta ^{\prime }$, we have $%
f(\beta )\preceq f(\beta ^{\prime })\preceq \beta ,$ then $|\omega (\beta
)-\omega (\beta ^{\prime })|\leq 1.$
\end{example}

\begin{lemma}
\label{l:omega}If $\sigma \sim \tau ,$ then $\omega (\sigma )\geq \omega
(\tau )-1.$
\end{lemma}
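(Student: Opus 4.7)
The strategy is to invoke Claim 2 in both sub-cases of the neighbor criterion of Section 3.1; in fact this yields the symmetric bound $|\omega(\sigma)-\omega(\tau)|\le 1$, which is stronger than the stated lemma. Since Claim 2 already delivers this bound whenever $f(\sigma)\preceq f(\tau)\preceq\sigma$, the entire task reduces to verifying such a prefix chain in each of the two cases (after possibly swapping $\sigma$ and $\tau$).

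For Case 1, one word is a prefix of the other; say $\sigma$ is the shorter, so $\tau=\sigma\tau'$ with $\tau'$ using at most two distinct letters. Then the maximal two-letter suffix $\tau_1$ of $\tau$ satisfies $|\tau_1|\ge|\tau'|$, which immediately forces $f(\tau)\preceq\sigma$. Writing $\sigma=f(\tau)\rho$ so that $\tau_1=\rho\tau'$, the letters appearing in $\rho$ form a subset of those appearing in $\tau_1$; hence $\rho$ is itself a suffix of $\sigma$ with at most two distinct letters, and maximality of $\sigma_1$ forces $|f(\sigma)|\le|\sigma|-|\rho|=|f(\tau)|$. Since both $f(\sigma)$ and $f(\tau)$ are prefixes of $\sigma$, this gives the prefix chain $f(\sigma)\preceq f(\tau)\preceq\sigma$ and Claim 2 applies.

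For Case 2, strip the common prefix $\beta$ to obtain $\sigma=\beta i[j]^k$ and $\tau=\beta j[i]^{k'}$ with $i\ne j$ and $k,k'\ge 0$. Each of the suffixes $i[j]^k$ and $j[i]^{k'}$ involves at most two distinct letters, so the same maximality argument yields $f(\sigma)\preceq\beta$ and $f(\tau)\preceq\beta$. Being two prefixes of $\beta$ they are comparable; without loss of generality $f(\sigma)\preceq f(\tau)$, and then $f(\sigma)\preceq f(\tau)\preceq\beta\preceq\sigma$, so Claim 2 again closes the case. (The degenerate sub-case $\beta=\varnothing$ simply forces $f(\sigma)=f(\tau)=\varnothing$, and $\omega(\sigma)=\omega(\tau)=1$.)

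I expect the only subtle step to be the Case 1 inequality $|f(\sigma)|\le|f(\tau)|$: one must carefully observe that the letters appearing in the ``extra'' piece $\rho$ sit inside the letters appearing in the maximal two-letter suffix $\tau_1$, so that $\rho$ inherits the two-letter property as a suffix of $\sigma$. It is precisely at this moment that the Case 1 hypothesis on $\tau'$ is used in an essential way; everything else is formal bookkeeping with prefixes and a direct appeal to Claim 2.
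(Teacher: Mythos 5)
Your proposal is correct and takes essentially the same route as the paper: split according to the neighbor criteria and then apply Claim \ref{C:2} to a verified prefix chain $f(\sigma)\preceq f(\tau)\preceq\sigma$, which yields the symmetric bound $|\omega(\sigma)-\omega(\tau)|\le 1$. The only differences are cosmetic: you push the type-2 case through Claim \ref{C:2} as well, where the paper instead observes directly that $\omega(\sigma i[j]^{p})=\omega(\sigma j[i]^{q})=\omega(\sigma ij)$, and you spell out the maximal-suffix bookkeeping that the paper compresses into the phrase ``as in Example \ref{E:1}.''
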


\begin{proof}
By the criteria of neighbor, without loss of generality, we only need to
deal with three cases: (1) $\sigma \prec \tau ;$ (2) $\sigma i[j]^{p}\ $and $%
\sigma j[i]^{q}$ with $i\neq j$ and $p,q\geq 1;$ (3) $\sigma i\ $and $\sigma
j[i]^{q}$ with $i\neq j$ and $q\geq 0.$ For case (2), it is clear that
\begin{equation*}
\omega (\sigma i[j]^{p})=\omega (\sigma j[i]^{q})=\omega (\sigma ij).
\end{equation*}%
For cases (1) and (3), we only use Claim \ref{C:2} as in Example %
\ref{E:1} above.
\end{proof}

\begin{proof}[Proof of Proposition 1]
As shown in Section 2, we can find the following path from $\sigma $ to the
empty word $\emptyset :$%
\begin{equation*}
\sigma \sim f(\sigma )\sim f^{2}(\sigma )\sim \cdots \sim f^{\omega (\sigma
)}(\sigma _{k})=\emptyset .
\end{equation*}%
That means $d_{t}(\sigma ,\emptyset )\leq \omega (\sigma ).$

It suffices to show $d_{t}(\sigma ,\emptyset )\geq \omega (\sigma ).$
Suppose on the contrary, if we give a sequence
\begin{equation*}
\sigma =\sigma _{0}\sim \sigma _{1}\sim \sigma _{2}\sim \cdots \sim \sigma
_{k}=\emptyset \text{ with }k\leq \omega (\sigma )-1,
\end{equation*}%
then $\omega (\sigma _{i+1})\geq \omega (\sigma _{i})-1$ for all $i$ by
Lemma \ref{l:omega}. Therefore, $0=\omega (\emptyset )\geq \omega (\sigma
)-k>0$ which is impossible. That means $d_{t}(\sigma ,\emptyset )=\omega
(\sigma ).$
\end{proof}

\bigskip

\section{Average geodesic distance to empty word}

We first recall some notations. For every $t,$ we denote $d_{k}(\sigma ,\tau )$ the
geodesic distance on $V_{k}.$ Given $k\geq 0,\,$let $L_{k}(\emptyset )=0$
and
\begin{equation*}
L_{k}(\sigma )=d_{k}(\sigma ,\emptyset )-1\ \text{for }\sigma \in V_{k}\text{
with }\sigma \neq \emptyset .
\end{equation*}
As shown in (\ref{mmm}), for average geodesic distance $\frac{%
\sum\nolimits_{|\tau |\leq t-1}L(\tau )}{\#\{\tau :|\tau |\leq t-1\}}+1$ to
the empty word, when we estimate $\frac{1}{t}\frac{\sum\nolimits_{|\tau |\leq
t-1}L(\tau )}{\#\{\tau :|\tau |\leq t-1\}},$ it is important for us to
estimate $\bar{\alpha}_{k}/k,$ where
\begin{equation*}
\bar{\alpha}_{k}=\frac{\sum_{|\sigma |=k}L_{k}(\sigma )}{\#\{\sigma :|\sigma
|=k\}}\text{ for }k\geq 0.
\end{equation*}%

\begin{lemma}
For any $k_{1},k_{2}\geq 1,$ we have
\begin{equation}
\bar{\alpha}_{k_{1}}+\bar{\alpha}_{k_{2}}\leq \bar{\alpha}_{k_{1}+k_{2}}\leq
\bar{\alpha}_{k_{1}}+\bar{\alpha}_{k_{2}}+1.  \label{hha}
\end{equation}%
In particular, $\{\bar{\alpha}_{k}\}_{k}$ is non-decreasing, i.e.,
\begin{equation}
\bar{\alpha}_{k+1}\geq \bar{\alpha}_{k}\text{ for all }k.  \label{mono}
\end{equation}%
As a result, for any positive integers $q$ and $k$ we obtain that%
\begin{equation}
\bar{\alpha}_{k}\geq \bar{\alpha}_{q}\big[\frac{k}{q}\big]\geq \frac{\bar{\alpha}_{q}%
}{q}(k-q+1),  \label{lower}
\end{equation}
\end{lemma}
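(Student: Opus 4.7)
The plan is to apply Lemma \ref{l:double} termwise to every length-$(k_1+k_2)$ word, factored uniquely as a prefix of length $k_1$ followed by a suffix of length $k_2$, and then sum.

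First I would observe that the map $(\tau,\sigma)\mapsto \tau\sigma$ is a bijection between the set of pairs with $|\tau|=k_1$, $|\sigma|=k_2$ and the set of words of length $k_1+k_2$. By Lemma \ref{l:double}, for each such pair
\begin{equation*}
L(\tau)+L(\sigma)\ \leq\ L(\tau\sigma)\ \leq\ L(\tau)+L(\sigma)+1.
\end{equation*}
Summing over all $3^{k_1}\cdot 3^{k_2}=3^{k_1+k_2}$ pairs, the middle sum becomes $\sum_{|\rho|=k_1+k_2}L(\rho)$, while the outer sums factor as $3^{k_2}\sum_{|\tau|=k_1}L(\tau)+3^{k_1}\sum_{|\sigma|=k_2}L(\sigma)$ (plus $3^{k_1+k_2}$ for the upper bound). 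Dividing throughout by $3^{k_1+k_2}$ yields (\ref{hha}) directly, since $\sum_{|\tau|=k}L(\tau)/3^k=\bar\alpha_k$.

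For the monotonicity (\ref{mono}), I would take $k_1=k$ and $k_2=1$ in the lower bound of (\ref{hha}), noting that every word of length $1$ is a neighbor of $\emptyset$, so $L(i)=0$ for $i\in\{1,2,3\}$ and hence $\bar\alpha_1=0$. This gives $\bar\alpha_{k+1}\geq \bar\alpha_k+\bar\alpha_1=\bar\alpha_k$. (If one prefers not to compute $\bar\alpha_1$ explicitly, the trivial bound $\bar\alpha_1\geq 0$ suffices.)

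Finally, for (\ref{lower}), I would iterate the lower bound in (\ref{hha}): writing $k=q\lfloor k/q\rfloor+r$ with $0\leq r<q$, induction on $\lfloor k/q\rfloor$ gives $\bar\alpha_k\geq \bar\alpha_q\lfloor k/q\rfloor+\bar\alpha_r\geq \bar\alpha_q\lfloor k/q\rfloor$ using $\bar\alpha_r\geq 0$ (which itself follows from (\ref{mono}) and $\bar\alpha_0=0$). The second inequality in (\ref{lower}) is the elementary estimate $\lfloor k/q\rfloor\geq (k-q+1)/q$, which holds because $q(\lfloor k/q\rfloor+1)>k$ forces $q\lfloor k/q\rfloor\geq k-q+1$. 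No step is really an obstacle here; the only thing to be careful about is the clean bijective decomposition of words used in summing, and the trivial but necessary verification that $\bar\alpha_1\geq 0$ (or $=0$) to launch the monotonicity from the superadditivity.
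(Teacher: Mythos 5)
Your proposal is correct and follows essentially the same route as the paper: both apply Lemma \ref{l:double} termwise over the bijective factorization of length-$(k_{1}+k_{2})$ words into a length-$k_{1}$ prefix and a length-$k_{2}$ suffix, sum, and divide by $3^{k_{1}+k_{2}}$ to get (\ref{hha}), then iterate for (\ref{mono}) and (\ref{lower}). The only cosmetic difference is at the last step, where the paper passes from $\bar{\alpha}_{k}$ down to $\bar{\alpha}_{q\left[ k/q\right] }$ via monotonicity while you split $k=q\lfloor k/q\rfloor +r$ and use $\bar{\alpha}_{r}\geq 0$; these are interchangeable.
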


\begin{proof}
We obtain that
\begin{equation*}
\bar{\alpha}_{k_{1}+k_{2}}=\frac{\sum\nolimits_{|\tau |=k_{1}}\sum_{|\sigma
|=k_{2}}L_{k_{1}+k_{2}}(\tau \sigma )}{\#\{\tau :|\tau |=k_{1}\}\cdot
\#\{\sigma :|\sigma |=k_{2}\}}.
\end{equation*}%
If $|\tau |=k_{1}$ and $|\sigma |=k_{2},$ using Lemma \ref{l:double}, we have%
\begin{equation}
L_{k_{1}}(\tau )+L_{k_{2}}(\sigma )\leq L_{k_{1}+k_{2}}(\tau \sigma )\leq
L_{k_{1}}(\tau )+L_{k_{2}}(\sigma )+1,  \label{oki}
\end{equation}%
which implies
\begin{eqnarray*}
\bar{\alpha}_{k_{1}+k_{2}} &\geq &\frac{\sum\nolimits_{|\tau
|=k_{1}}\sum_{|\sigma |=k_{2}}\left( L_{k_{1}}(\tau )+L_{k_{2}}(\sigma
)\right) }{\#\{\tau :|\tau |=k_{1}\}\cdot \#\{\sigma :|\sigma |=k_{2}\}}=%
\bar{\alpha}_{k_{1}}+\bar{\alpha}_{k_{2}}, \\
\bar{\alpha}_{k_{1}+k_{2}} &\leq &\frac{\sum\nolimits_{|\tau
|=k_{1}}\sum_{|\sigma |=k_{2}}\left( L_{k_{1}}(\tau )+L_{k_{2}}(\sigma
)+1\right) }{\#\{\tau :|\tau |=k_{1}\}\cdot \#\{\sigma :|\sigma |=k_{2}\}}=%
\bar{\alpha}_{k_{1}}+\bar{\alpha}_{k_{2}}+1,
\end{eqnarray*}%
then (\ref{hha}) follows. In particular, we have $\bar{\alpha}_{k+1}\geq
\bar{\alpha}_{k}+\bar{\alpha}_{1}\geq \bar{\alpha}_{k}.$

Using (\ref{hha}) repeatedly, we have $\bar{\alpha}_{qm}\geq \bar{\alpha}%
_{q(m-1)}+\bar{\alpha}_{q}\geq \cdots \geq m\bar{\alpha}_{q}.$ It follows
from (\ref{mono}) that%
\begin{equation*}
\bar{\alpha}_{qm+(q-1)}\geq \bar{\alpha}_{qm+(q-2)}\geq \cdots \geq \bar{%
\alpha}_{qm+1}\geq \bar{\alpha}_{qm}\geq m\bar{\alpha}_{q}
\end{equation*}%
which implies $\bar{\alpha}_{k}\geq \bar{\alpha}_{q}[\frac{k}{q}]\geq \frac{%
\bar{\alpha}_{q}}{q}(k-q+1).$
\end{proof}

\begin{proof}[Proof of Proposition 2]
Since $\{\bar{\alpha}_{m}\}_{m}$ is superadditive, i.e., $\bar{\alpha}%
_{k_{1}+k_{2}}\geq \bar{\alpha}_{k_{1}}+\bar{\alpha}_{k_{2}}$, by Fekete's
superadditive lemma (\cite{Fe}), the limit $\lim\limits_{m\rightarrow \infty
}\frac{\bar{\alpha}_{m}}{m}$ exists and is equal to $\sup_{m}\frac{\bar{%
\alpha}_{m}}{m}$. We shall verify that $\lim\limits_{m\rightarrow \infty }%
\frac{\bar{\alpha}_{m}}{m}<+\infty .$

Fix an integer $q.$ For any $p=0,1,\cdots ,(q-1),$ using (\ref{hha}) and (%
\ref{mono}), we have $\bar{\alpha}_{qk+p}\leq \bar{\alpha}_{q(k+1)}\leq (k+1)%
\bar{\alpha}_{q}+k.$ Letting $k\rightarrow \infty ,$ we have $%
\limsup\limits_{m\rightarrow \infty }\frac{\bar{\alpha}_{m}}{m}\leq \frac{%
\bar{\alpha}_{q}}{q}+\frac{1}{q}.$
\end{proof}

Set $\alpha ^{\ast }=\lim_{m\rightarrow \infty }\frac{\bar{\alpha}_{m}}{m}%
=\sup_{m}\frac{\bar{\alpha}_{m}}{m}.$

\medskip

\section{Asymptotic formula}

Now we will investigate
\begin{equation*}
\kappa _{t}=\frac{\sum\nolimits_{\sigma \in V_{t}}L(\sigma )}{\#V_{t}}=\frac{%
\sum\nolimits_{k=0}^{t}\sum\nolimits_{|\sigma |=k}L_{k}(\sigma )}{%
\sum\nolimits_{k=0}^{t}\#\{\sigma :|\sigma |=k\}}=\frac{\sum%
\nolimits_{k=0}^{t}\bar{\alpha}_{k}3^{k}}{\sum\nolimits_{k=0}^{t}3^{k}},
\end{equation*}%
where we let $L(\emptyset )=\bar{\alpha}_{0}=\frac{\bar{\alpha}_{0}}{0}=0.$
At first, we have%
\begin{equation}
\kappa _{t}\leq \Big(\sup_{m\geq 1}\frac{\bar{\alpha}_{m}}{m}\Big)\frac{%
\sum\nolimits_{k=0}^{t}k3^{k}}{\sum\nolimits_{k=0}^{t}3^{k}}=\alpha ^{\ast }%
\frac{\sum\nolimits_{k=0}^{t}k3^{k}}{\sum\nolimits_{k=0}^{t}3^{k}}\leq
(\alpha ^{\ast }\chi (t))t,  \label{k-upper}
\end{equation}%
where
\begin{equation}
\chi (t)=\frac{\sum\nolimits_{k=0}^{t}k3^{k}}{t\sum\nolimits_{k=0}^{t}3^{k}}=%
\frac{(3t-\frac{3}{2})+\frac{3}{2}\frac{1}{3^{t}}}{(3t)(1-\frac{1}{3^{t+1}})}%
\leq 1,  \label{less}
\end{equation}%
since $(3t-\frac{3}{2})+\frac{3}{2}\frac{1}{3^{t}}-(3t)(1-\frac{1}{3^{t+1}})=%
\frac{1}{2}\left( 2t-3^{t+1}+3\right) \frac{1}{3^{t}}<0$ for any $t\geq 1.$
We also have%
\begin{equation}
\lim_{t\rightarrow \infty }\chi (t)=1.  \label{equal}
\end{equation}

On the other hand, using $\bar{\alpha}_{k}\geq \frac{\bar{\alpha}_{q}}{q}%
(k-q+1)$ in (\ref{lower}), for any $t$ we have
\begin{equation*}
\kappa _{t}\geq \frac{\sum\nolimits_{k=0}^{t}\frac{\bar{\alpha}_{q}(k-q+1)}{q%
}3^{k}}{\sum\nolimits_{k=0}^{t}3^{k}}\geq \frac{\bar{\alpha}_{q}}{q}\left(
\frac{\sum\nolimits_{k=0}^{t}k\cdot 3^{k}}{\sum\nolimits_{k=0}^{t}3^{k}}%
-q+1\right) ,
\end{equation*}%
that is
\begin{equation}
\kappa _{t}\geq \frac{\bar{\alpha}_{q}}{q}(\chi (t)\cdot t-q+1).
\label{k-lower}
\end{equation}

Denote%
\[
\begin{split}
\pi _{t} =\sum\limits_{\sigma ,\tau \in V_{t}}d_{t}(\sigma ,\tau ),\quad \;
&\mu _{t}=\sum\limits_{i}\sum\limits_{i\preceq \sigma ,i\preceq \tau
}d_{t}(\sigma ,\tau ), \\
\lambda _{t} =\sum\limits_{\sigma \in V_{t}}d_{t}(\sigma ,\emptyset ),%
\quad \quad &\nu _{t}=\sum\limits_{i\neq j}\sum\limits_{i\preceq \sigma
,j\preceq \tau }d_{t}(\sigma ,\tau ).
\end{split}%
\]
Then
\begin{equation*}
\pi _{t}=\mu _{t}+\lambda _{t}+\nu _{t}.
\end{equation*}%
By Lemma \ref{l:3}, we have
\begin{equation*}
\mu _{t}=3\sum\nolimits_{\sigma ^{\prime },\tau ^{\prime }\in
V_{t-1}}d_{t-1}(\sigma ^{\prime },\tau ^{\prime })=3\pi _{t-1},
\end{equation*}%
and thus%
\begin{equation}
\pi _{t}=3\pi _{t-1}+\lambda _{t}+\nu _{t}.  \label{imp3}
\end{equation}

\textbf{(1) The estimate of }$\lambda _{t}:$ Using (\ref{k-upper})-(\ref%
{less}), we have%
\begin{eqnarray}
\lambda _{t} &=&\left( \frac{\sum\nolimits_{\sigma \in V_{t}}d_{t}(\sigma
,\emptyset )}{\#V_{t}}\right) \#V_{t}  \notag \\
&=&\left( \frac{\sum\nolimits_{\sigma \in V_{t}}L_{t}(\sigma ,\emptyset )}{%
\#V_{t}}+\frac{\sum\nolimits_{\sigma \neq \emptyset }1}{\#V_{t}}\right)
\#V_{t}  \label{lambda=} \\
&=&\kappa _{t}\#V_{t}+(\#V_{t}-1)\leq \alpha ^{\ast }t(\#V_{t})+(\#V_{t}-1).
\notag
\end{eqnarray}

\textbf{(2) The estimate of} $\nu _{t}:$ For $\sigma =i\sigma ^{\prime },$ $%
\tau =j\tau ^{\prime }$ with $i\neq j,$ by Lemma \ref{l:3} we have
\begin{equation}
d_{t}(\sigma ,\tau )\leq d_{t}(i\sigma ^{\prime },i)+d_{t}(j\tau ^{\prime
},j)+d_{t}(i,j)=d_{t-1}(\sigma ^{\prime },\emptyset )+d_{t-1}(\tau ^{\prime
},\emptyset )+1.  \label{new}
\end{equation}%
Notice that
\begin{equation}
d_{t-1}(\sigma ^{\prime },\emptyset )\leq L(\sigma ^{\prime })+1\text{ for
all }\sigma ^{\prime }\in V_{t-1},  \label{new1}
\end{equation}%
since $d_{t-1}(\sigma ^{\prime },\emptyset )=L(\sigma ^{\prime })+1$ for $%
\sigma ^{\prime }\neq \emptyset $ and (\ref{new1}) is also true for $\sigma
^{\prime }=\emptyset .$

Using (\ref{k-upper})-(\ref{less}) and (\ref{new})-(\ref{new1}), we have the
following upper bound of $\nu _{t}.$%
\begin{eqnarray}
\nu _{t} &=&\sum\nolimits_{i\neq j}\sum\nolimits_{i\preceq \sigma ,j\preceq
\tau }d_{t}(\sigma ,\tau )  \notag \\
&\leq &C_{3}^{2}\sum\nolimits_{\sigma ^{\prime },\tau ^{\prime }\in
V_{t-1}}(d_{t-1}(\sigma ^{\prime },\emptyset )+d_{t-1}(\tau ^{\prime
},\emptyset )+1)  \notag \\
&\leq &3(\#V_{t-1})^{2}+6(\#V_{t-1})^{2}\frac{\sum\nolimits_{\sigma ^{\prime
}\in V_{t-1}}(L(\sigma ^{\prime })+1)}{\#V_{t-1}}  \label{nu-upper} \\
&\leq &3(\#V_{t-1})^{2}+6(\#V_{t-1})^{2}\left( \kappa _{t-1}+1\right)  \notag
\\
&\leq &9(\#V_{t-1})^{2}+6(\#V_{t-1})^{2}\cdot \alpha ^{\ast }\cdot (t-1).
\notag
\end{eqnarray}

On the other hand, for $i\neq j,$ using the Jordan curve theorem, we have
\begin{equation*}
d_{t}(i\sigma ^{\prime },j\tau ^{\prime })\geq L_{t-1}(\sigma ^{\prime
})+L_{t-1}(\tau ^{\prime }).
\end{equation*}%
Then we obtain the following lower bound of $\nu _{t}.$
\begin{eqnarray}
\nu _{t} &=&\sum\nolimits_{i\neq j}\sum\nolimits_{i\preceq \sigma ,j\preceq
\tau }d_{t}(\sigma ,\tau )  \notag \\
&\geq &C_{3}^{2}\sum\nolimits_{\sigma ^{\prime },\tau ^{\prime }\in
V_{t-1}}(L_{t-1}(\sigma ^{\prime })+L_{t-1}(\tau ^{\prime }))  \notag \\
&\geq &6\frac{\sum\nolimits_{\sigma ^{\prime }\in V_{t-1}}L_{t-1}(\sigma
^{\prime })}{(\#V_{t-1})}(\#V_{t-1})^{2}  \label{nu-lower} \\
&\geq &6\kappa _{t-1}(\#V_{t-1})^{2}  \notag \\
&\geq &6\left( \frac{\kappa _{t-1}}{t-1}\right) (\#V_{t-1})^{2}(t-1),  \notag
\end{eqnarray}%
where
\begin{equation}
\frac{\kappa _{t-1}}{t-1}\rightarrow \alpha ^{\ast }\text{ as }t\rightarrow
\infty  \label{text}
\end{equation}%
since
\begin{equation*}
\frac{\kappa _{t-1}}{t-1}=\frac{\sum\nolimits_{k=0}^{t-1}\left( \frac{\bar{%
\alpha}_{k}}{k}\right) k3^{k}}{(t-1)\sum\nolimits_{k=0}^{t-1}3^{k}}
\end{equation*}%
with $\lim\limits_{k\rightarrow \infty }\frac{\bar{\alpha}_{k}}{k}=\alpha
^{\ast }$ and $\lim\limits_{t\rightarrow \infty }\frac{\sum%
\nolimits_{k=0}^{t-1}k3^{k}}{(t-1)\sum\nolimits_{k=0}^{t-1}3^{k}}%
=\lim\limits_{t\rightarrow \infty }\chi (t-1)=1.$

\begin{proof}[Proof of Proposition 3]
\

\textbf{(i)} \textbf{Upper bound of} $\pi _{t}:$ Using (\ref{lambda=}) and (%
\ref{nu-upper})$,$ we have%
\begin{equation}
\lambda _{t}+\nu _{t}\leq \psi (t)+6(\#V_{t-1})^{2}\alpha ^{\ast }(t-1),
\label{nu_lambda}
\end{equation}%
where $\psi (t)=\alpha ^{\ast }t(\#V_{t})+(\#V_{t}-1)+9(\#V_{t-1})^{2}.$

Fix an integer $q.$ Using (\ref{imp3}) and (\ref{nu_lambda}) again and
again, for $t>q$ we have%
\[
\begin{split}
\pi _{t} &\leq 3\pi _{t-1}+\psi (t)+6\alpha ^{\ast }(t-1)(\#V_{t-1})^{2} \\
&\leq 3^{2}\pi _{t-2}+\left( \psi (t)+3\psi (t-1)\right) +6\alpha ^{\ast
}\left( (t-1)(\#V_{t-1})^{2}+18\alpha ^{\ast }(t-2)(\#V_{t-2})^{2}\right) \\
&\leq \cdots \leq 3^{t-q}\pi _{q}+\sum\limits_{k=q}^{t-1}3^{t-k-1}\psi
(k+1)+6\alpha ^{\ast }\sum\limits_{k=q}^{t-1}3^{t-k-1}k(\#V_{k})^{2}.
\end{split}
\]

We can check that
\begin{equation*}
\frac{3^{t-q}\pi _{q}+\sum\nolimits_{k=q}^{t-1}3^{t-k-1}\psi (k+1)}{%
t(\#V_{t})^{2}}\rightarrow 0\text{ as }t\rightarrow \infty .
\end{equation*}%
In fact, we only need to estimate \newline
(i) $\frac{\sum\nolimits_{k=q}^{t-1}3^{t-k-1}(k+1)(\#V_{k+1})}{t(\#V_{t})^{2}%
}=\frac{\sum\nolimits_{k=q+1}^{t}3^{t-k}k\frac{3^{k+1}-1}{2}}{t(\frac{%
3^{t+1}-1}{2})^{2}}\leq \frac{3}{2}\frac{\frac{t(t+1)}{2}3^{t}}{t(\frac{%
3^{t+1}-1}{2})^{2}}\rightarrow 0$ as $t\rightarrow \infty ;$ \newline
(ii) $\frac{\sum\nolimits_{k=q}^{t-1}3^{t-k-1}(\#V_{k+1}-1)}{t(\#V_{t})^{2}}%
\leq \frac{\sum\nolimits_{k=q}^{t}3^{t-k-1}(\#V_{k})^{2}}{t(\#V_{t})^{2}}%
\leq \frac{1}{4}\frac{3^{t}\sum\nolimits_{k=0}^{t}3^{k+1}}{t(\frac{3^{t+1}-1%
}{2})^{2}}\rightarrow 0$ as $t\rightarrow \infty .$

Therefore we obtain that%
\[
\begin{split}
\limsup_{t\rightarrow \infty }\frac{\bar{D}(t)}{t} &=\limsup_{t\rightarrow
\infty }\frac{\pi _{t}}{t(\#V_{t}-1)\#V_{t}/2} \\
&\leq 6\alpha ^{\ast }\lim_{t\rightarrow \infty }\frac{\sum%
\nolimits_{k=q}^{t-1}3^{t-k-1}k(\#V_{k})^{2}}{t(\#V_{t}-1)\#V_{t}/2} \\
&=12\alpha ^{\ast }\lim_{t\rightarrow \infty }\frac{\sum%
\nolimits_{k=q}^{t-1}3^{t-k-1}k(\#V_{k})^{2}}{t(\#V_{t})^{2}}.
\end{split}%
\]
Using Stolz theorem, we have%
\[
\begin{split}
\lim_{t\rightarrow \infty }\frac{\sum\nolimits_{k=q}^{t-1}3^{t-k-1}k(%
\#V_{k})^{2}}{t(\#V_{t})^{2}} &=\frac{1}{3}\lim_{t\rightarrow \infty }\frac{%
\sum\nolimits_{k=q}^{t-1}k(\#V_{k})^{2}/3^{k}}{t(V_{t})^{2}/3^{t}} \\
&=\frac{1}{3}\lim_{t\rightarrow \infty }\frac{(t-1)(\#V_{t-1})^{2}/3^{t-1}}{%
t(\#V_{t})^{2}/3^{t}-(t-1)(\#V_{t-1})^{2}/3^{t-1}} \\
&=\lim_{t\rightarrow \infty }\frac{(t-1)(3^{t}-1)^{2}}{6(t\cdot
3^{2t})+3^{2t+1}-6\cdot 3^{t}-2t+3} \\
&=\frac{1}{6}.
\end{split}%
\]
That means
\begin{equation}
\limsup_{t\rightarrow \infty }\frac{\bar{D}(t)}{t}\leq 2\alpha ^{\ast }.
\label{Dlimsup}
\end{equation}

\textbf{(ii) Lower bound of} $\pi _{t}:$ By (\ref{text}), suppose there
exists an integer $k_{0}$ such that $\frac{\kappa _{t-1}}{t-1}\geq (\alpha
^{\ast }-\varepsilon )$ for all $t\geq k_{0}.$ Using (\ref{imp3}) and (\ref%
{nu-lower}) we have%
\begin{eqnarray*}
\pi _{t} &\geq &3\pi _{t-1}+\nu _{t} \\
&\geq &3\pi _{t-1}+6(\alpha ^{\ast }-\varepsilon )(\#V_{t-1})^{2}(t-1) \\
&\geq &3^{2}\pi _{t-2}+6(\alpha ^{\ast }-\varepsilon )\left(
(\#V_{t-1})^{2}(t-1)+3(\#V_{t-2})^{2}(t-2)\right) \\
&\geq &\cdots \geq 6(\alpha ^{\ast }-\varepsilon
)\sum\nolimits_{k=k_{0}}^{t-1}3^{t-1-k}k(\#V_{k})^{2}.
\end{eqnarray*}%
In the same way as above, we obtain that
\begin{equation}
\liminf_{t\rightarrow \infty }\frac{\bar{D}(t)}{t}\geq 2\alpha ^{\ast }.
\label{Dliminf}
\end{equation}

It follows from (\ref{Dlimsup}) and (\ref{Dliminf}) that%
\begin{equation*}
\lim_{t\rightarrow \infty }\frac{\bar{D}(t)}{t}=2\alpha ^{\ast }.
\end{equation*}
\end{proof}

\medskip

\section{Determination of $\alpha^*$}

\subsection{Normal decomposition}

$\ $

Given a word $\sigma ,$ let $C(\sigma )$ be the set of letters appearing in $%
\sigma ,$ $\#\sigma $ the cardinality of $C(\sigma ),$ and $\sigma |_{-1}$
the last letter of $\sigma .$

For $\sigma =222113112312$, $\omega (\sigma )=4,$ we can obtain a
decomposition
\begin{equation*}
\sigma =(222)(11311)(23)(12)=\tau _{1}\tau _{2}\tau _{3}\tau _{4}
\end{equation*}%
such that $\tau _{2},\tau _{3},\tau _{4}$ contains $2$ letters and $3\tau
_{4},$ $1\tau _{3},$ $2\tau _{2}$ contains $3$ letters, where $3,$ $1$ and $%
2 $ are the last letter of $\tau _{3},\tau _{2}$ and $\tau _{1}$
respectively. In the same way, for $\sigma $ with $\omega (\sigma )=l>1,$ we
have the decomposition
\begin{equation}
\sigma =\tau _{1}\tau _{2}\cdots \tau _{l-1}\tau _{l}\text{ with }\omega
(\sigma )=l>1  \label{t-1}
\end{equation}%
satisfying%
\begin{equation}\label{tt}
\begin{split}
&\#\tau _{1} \leq 2\text{ and }\#\tau _{i}=2\text{ for }i\geq 2, \\
&|\tau _{1}| \geq 1\text{ and }|\tau _{i}|\geq 2\text{ for }i\geq 2, \\
&\{\tau _{i}|_{-1}\} =\{1,2,3\}\backslash C(\tau _{i+1}),
\end{split}%
\end{equation}
where the last one means the tail of $\tau _{i}$ with $i<l$ is \emph{%
uniquely determined} by $\tau _{i+1}.$ For $\sigma $ with $\omega (\sigma
)=1,$ we have $\#\sigma \leq 2,$ we give the decomposition
\begin{equation}
\sigma =\tau _{1}\text{ with }\omega (\sigma )=1  \label{t+1}
\end{equation}%
and (\ref{tt}) also holds. We call the decomposition (\ref{t-1}) or (\ref%
{t+1}) the \textbf{normal decomposition} if (\ref{tt}) holds. For $k\geq 3,$
let
\begin{eqnarray*}
T(k) &=&\#\{|\tau |=k\text{ with letters in }\{1,2\}:\#\tau =2\}, \\
h(k) &=&\#\{|\tau |=k\text{ with letters in }\{1,2,3\}:\tau |_{-1}=1\text{
and }\#\tau \leq 2\}, \\
M(k) &=&\#\{|\tau |=k\text{ with letters in }\{1,2\}:\tau |_{-1}=1\text{ and
}\#\tau =2\}, \\
e(k) &=&\#\{|\tau |=k\text{ with letters in }\{1,2,3\}:\#\tau \leq 2\}.
\end{eqnarray*}%
Then
\begin{eqnarray*}
T(k) &=&2^{k}-2,\text{ \quad }h(k)=2^{k}-1, \\
M(k) &=&2^{k-1}-1,\text{ }e(k)=3\cdot 2^{k}-3=3h(k).
\end{eqnarray*}%
Notice that $e(k)=\#\{|\tau |=k:\omega (\tau )=1\}$ and%
\begin{equation}
2M(k)=T(k).  \label{4}
\end{equation}

Given $k_{1}+\cdots +k_{l}=t$ with $k_{1}\geq 1$ and $k_{2},\cdots
,k_{l}\geq 2,$ consider
\begin{equation*}
W_{k_{1}\cdots k_{l}}=\#\{|\sigma |=t:\sigma =\tau _{1}\tau _{2}\cdots \tau
_{l-1}\tau _{l}\text{ are normal with }|\tau _{i}|=k_{i}\text{ for all }i\}.
\end{equation*}

\begin{lemma}
If $l\geq 3,$ then
\begin{equation}
W_{k_{1}\cdots k_{l}}=h(k_{1})\left[ (C_{2}^{1}M(k_{2}))\cdots
((C_{2}^{1}M(k_{l-1}))\right] (C_{3}^{2}T(k_{l})).\text{ }  \label{hei}
\end{equation}%
For $l=2,$ we have $W_{k_{1}k_{2}}=h(k_{1})(C_{3}^{2}T(k_{l})).$
\end{lemma}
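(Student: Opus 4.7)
The plan is to prove (\ref{hei}) by a direct counting argument that decomposes the choice of $\sigma$ into independent choices of its pieces $\tau_l, \tau_{l-1}, \ldots, \tau_1$, working from right to left so that the "last letter" condition in (\ref{tt}) can be resolved at each step before the piece is counted.

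First I would observe that once $\tau_{i+1}$ is chosen (for $2 \le i+1 \le l$), the constraint $\{\tau_i|_{-1}\} = \{1,2,3\} \setminus C(\tau_{i+1})$ \emph{uniquely determines} $\tau_i|_{-1}$, because $\#\tau_{i+1} = 2$ leaves exactly one letter of $\{1,2,3\}$ missing. Consequently the pieces can be chosen from right to left independently: choose $\tau_l$ freely subject to its own constraints, then choose each subsequent piece $\tau_i$ subject to its own constraints together with the induced constraint on $\tau_i|_{-1}$.

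Next I would count each piece. For $\tau_l$: the constraints are $|\tau_l| = k_l$, $\#\tau_l = 2$, with no restriction on the final letter. Choosing the 2-element alphabet contributes a factor $C_3^2$, and then by the symmetry of relabelling letters the number of length-$k_l$ words over a fixed 2-letter alphabet that use both letters equals $T(k_l)$; hence $\tau_l$ contributes $C_3^2 T(k_l)$. For each middle piece $\tau_i$ with $2 \le i \le l-1$: the final letter $a := \tau_i|_{-1}$ is fixed; picking the second letter of $C(\tau_i)$ gives $C_2^1$ options, and for each the count of length-$k_i$ words over $\{a,b\}$ using both letters and ending in $a$ equals $M(k_i)$ by symmetry; thus $\tau_i$ contributes $C_2^1 M(k_i)$. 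For $\tau_1$: the final letter is again fixed by $\tau_2$, and the remaining conditions $|\tau_1| = k_1$, $\#\tau_1 \le 2$ give, by symmetry on the fixed last letter, exactly $h(k_1)$ words.

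Multiplying these independent factors yields the formula in the $l \ge 3$ case; when $l = 2$ the same argument applies with the middle product empty, giving $W_{k_1 k_2} = h(k_1) (C_3^2 T(k_2))$. I do not anticipate a genuine obstacle here: the entire argument is bookkeeping once the key observation (that each $\tau_i|_{-1}$ is forced by $\tau_{i+1}$, so the pieces decouple) is made explicit. The only point requiring care is to verify the symmetry claims — namely that the count of length-$k$ words over $\{1,2,3\}$ satisfying the last-letter plus alphabet constraints is invariant under relabelling letters of $\{1,2,3\}$, so that the auxiliary quantities $T, M, h$ (defined with specific letters) correctly enumerate the general case after the appropriate multiplicative factors $C_3^2$ and $C_2^1$ are applied.
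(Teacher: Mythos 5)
Your proof is correct and follows essentially the same route as the paper's: both count the pieces from right to left, using the fact that $\#\tau_{i+1}=2$ forces the tail of $\tau_i$, so that $\tau_l$ contributes $C_3^2 T(k_l)$, each middle piece contributes $C_2^1 M(k_i)$, and $\tau_1$ contributes $h(k_1)$. Your explicit attention to the relabelling symmetry (justifying why $T$, $M$, $h$, defined with specific letters, count the general case) is a point the paper leaves implicit, but the argument is the same.
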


\begin{proof}
Fix $l\geq 3.$ At first we can choose two distinct letters $i_{1}<i_{2}$
from $\{1,2,3\}$ such that $C(\tau _{l})=\{i_{1},i_{2}\},$ then the number
of choices for $\tau _{l}$ is $C_{3}^{2}T(k_{l}).$ When $\tau _{l}$ is
given, the tail of $\tau _{l-1},$ say $1,$ is uniquely determined by $\tau
_{l},$ then the number of choices for $\tau _{l-1}$ is $C_{2}^{1}M(k_{l-1})$%
. Again and again, when $\tau _{2}$ is given$,$ then the tail of $\tau _{1}$
is uniquely determined and number of choices for $\tau _{1}$ is $h(k_{1}).$
Then (\ref{hei}) follows.
\end{proof}

Then we have
\begin{equation*}
\sum\limits_{l\geq 1}\sum\limits_{\substack{ k_{1}\geq 1,k_{2},\cdots
k_{l}\geq 3  \\ k_{1}+\cdots +k_{l}=t}}W_{k_{1}\cdots
k_{l}}=e(t)+\#\{|\sigma |=t:\omega (\sigma )\geq 2\}=3^{t},
\end{equation*}%
and
\begin{equation*}
\sum\limits_{|\sigma |=t}\omega (\sigma )=e(t)+\sum\limits_{l\geq 2}\Bigg(
l\cdot \sum\limits_{\substack{ k_{1}\geq 1,\text{ }k_{2},\cdots ,k_{l}\geq 2
\\ k_{1}+\cdots +k_{l}=t}}W_{k_{1}\cdots k_{l}}\Bigg) .
\end{equation*}

\begin{lemma}
If $l\geq 2,$ then%
\begin{equation}
W_{k_{1}\cdots k_{l-1}k_{l}}=T(k_{l})W_{k_{1}\cdots k_{l-1}}\text{.}
\label{test}
\end{equation}
\end{lemma}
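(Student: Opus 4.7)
The plan is to prove this recursion by a direct combinatorial argument that mirrors the proof of the previous lemma. The crucial observation is that the normal-decomposition condition $\{\tau_i|_{-1}\} = \{1,2,3\} \setminus C(\tau_{i+1})$ in (\ref{tt}) only constrains positions $i < l-1$; thus in a normal decomposition of $\tau_1 \cdots \tau_{l-1}$ (counted by $W_{k_1 \cdots k_{l-1}}$), the last letter of the rightmost block $\tau_{l-1}$ is a priori unconstrained.

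First I would fix such a decomposition $\tau_1 \cdots \tau_{l-1}$ and count the admissible extensions $\tau_l$. Applying the constraint (\ref{tt}) at the new index $i = l-1$, the block $\tau_l$ must have $|\tau_l| = k_l$, $\#\tau_l = 2$, and $C(\tau_l)$ equal to the specific two-element set $\{1,2,3\} \setminus \{\tau_{l-1}|_{-1}\}$; the length condition $|\tau_l|\geq 2$ is automatic since $l \geq 2$. The number of length-$k_l$ words that use exactly two prescribed letters is $T(k_l) = 2^{k_l} - 2$. Since this count is the same regardless of which letter $\tau_{l-1}|_{-1}$ happens to be, summing over the $W_{k_1 \cdots k_{l-1}}$ decompositions of $\tau_1 \cdots \tau_{l-1}$ yields the identity (\ref{test}) directly.

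As a sanity check, I would also verify the formula algebraically using (\ref{hei}): for $l \geq 3$ the ratio $W_{k_1 \cdots k_l}/W_{k_1 \cdots k_{l-1}}$ equals
\[
\frac{C_2^1 M(k_{l-1}) \cdot C_3^2 T(k_l)}{C_3^2 T(k_{l-1})},
\]
which collapses to $T(k_l)$ by the identity $2M(k) = T(k)$ from (\ref{4}). The boundary case $l = 2$ is handled with the convention $W_{k_1} = e(k_1) = 3 h(k_1)$, and the same cancellation gives $W_{k_1 k_2}/W_{k_1} = C_3^2 T(k_2)/3 = T(k_2)$.

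The only subtle point is the asymmetry in (\ref{hei}) between the rightmost block (tail factor $C_3^2 T$) and interior blocks (factor $C_2^1 M$): appending $\tau_l$ converts what was the tail factor $C_3^2 T(k_{l-1})$ into the interior factor $C_2^1 M(k_{l-1})$, and it is precisely the combinatorial identity $2M=T$ of (\ref{4}) that reconciles the two expressions and makes the recursion come out cleanly as $T(k_l)$.
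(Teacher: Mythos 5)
Your proposal is correct, and your primary argument takes a genuinely different route from the paper's own proof. The paper proves (\ref{test}) purely algebraically from the closed formula: for $l\geq 3$ it divides (\ref{hei}) for $W_{k_{1}\cdots k_{l}}$ by the corresponding expression for $W_{k_{1}\cdots k_{l-1}}$ and invokes $2M(k)=T(k)$ from (\ref{4}), and for $l=2$ it computes $W_{k_{1}k_{2}}=h(k_{1})(C_{3}^{2}T(k_{2}))=T(k_{2})\,e(k_{1})=T(k_{2})W_{k_{1}}$ using $3h(k)=e(k)$ --- which is exactly your ``sanity check,'' so that part of your write-up coincides with the paper. Your main argument is instead a direct left-to-right counting that bypasses (\ref{hei}) entirely: fix a normal decomposition $\tau_{1}\cdots\tau_{l-1}$, observe that the constraint of (\ref{tt}) at the new index $i=l-1$ together with $\#\tau_{l}=2$ is equivalent to requiring $C(\tau_{l})=\{1,2,3\}\setminus\{\tau_{l-1}|_{-1}\}$ exactly, so there are $2^{k_{l}}-2=T(k_{l})$ admissible extensions regardless of which letter $\tau_{l-1}|_{-1}$ is; conversely, truncating a normal $l$-block decomposition leaves a normal $(l-1)$-block one, so the correspondence is a bijection and the recursion follows by summing. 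This buys you a self-contained proof that treats $l=2$ and $l\geq 3$ uniformly (the paper's case split for $l=2$ disappears, since a one-block ``decomposition'' is just a word with $\#\tau_{1}\leq 2$, counted by $W_{k_{1}}=e(k_{1})$), and it explains combinatorially why the factor is exactly $T(k_{l})$; the paper's route is shorter once (\ref{hei}) is in hand, but it needs the identity $2M(k)=T(k)$ precisely to repair the right-to-left asymmetry of (\ref{hei}) that you correctly identify at the end. The only nitpick: the condition $|\tau_{l}|\geq 2$ is guaranteed by the standing hypothesis $k_{l}\geq 2$ in the definition of $W_{k_{1}\cdots k_{l}}$, not by $l\geq 2$ itself; this is a harmless imprecision, not a gap.
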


\begin{proof}
If $l\geq 3,$ using (\ref{4}) and (\ref{hei}), we obtain (\ref{test}). If $%
l=2,$ we have
\begin{equation}
W_{k_{1}k_{2}}=h(k_{1})(C_{3}^{2}T(k_{2}))=T(k_{2})(3h(k_{1}))=T(k_{2})(e(k_{1}))=T(k_{2})W_{k_{1}}
\label{666}
\end{equation}%
since $3h(k)=e(k).$
\end{proof}

\subsection{Proof of Proposition$\ $4}

\

For $x=\cdots x_{2}x_{1}\in \Sigma ,$ let $x|_{-k}=x_{k}x_{k-1}\cdots x_{1}.$

Given $k_{1}\geq 1$ and $k_{2},\cdots ,k_{q}\geq 2,$ consider
\[
\begin{split}
A_{k_{1}\cdots k_{q}}=\{x\in \Sigma : \;&(x1)|_{-(k_{1}+\cdots +k_{q})}=\tau
_{1}\tau _{2}\cdots \tau _{q-1}\tau _{q}\text{ is a } \\
&\text{normal decomposition with }|\tau _{i}|=k_{i}\text{ for all }i\}.
\end{split}%
\]
Since $1$ is the tail of $\tau _{q}$, we have
\begin{equation}
\mu (A_{k_{1}\cdots k_{q}})=\frac{W_{k_{1}\cdots k_{q}}/3}{3^{k_{1}+\cdots
+k_{q}-1}}=\frac{W_{k_{1}\cdots k_{q}}}{3^{k_{1}+\cdots +k_{q}}}.
\label{haha}
\end{equation}

Suppose $x1=\cdots x_{p_{n+1}}\cdots x_{p_{n}}\cdots x_{p_{1}}\cdots 1\ $and
$Y_{t}(x)=n,$ then
\begin{equation*}
(x1)|_{-t}=(x_{t}\cdots x_{p_{n}})(x_{(p_{n}-1)}\cdots x_{p_{n-1}})\cdots
(x_{p_{2}}\cdots x_{p_{1}})(x_{(p_{1}-1)}\cdots 1)
\end{equation*}%
with $p_{n+1}>t\geq p_{n}$ and
\begin{equation}
x\in A_{k_{1}k_{2}\cdots k_{n+1}}\text{ with } k_1+\cdots +k_{n+1}=t \text{ and } Y_t(x) =n  \label{haha1}
\end{equation}%
where $k_{1}=t-p_{n}+1 (\geq 1)$ and $k_{i}=p_{n-i+2}-p_{n-i+1} (\geq 2)$ for $i\geq 2.$

\begin{lemma}
\label{l: lim}Suppose $J_{n}=S_{1}+S_{2}+\cdots +S_{n}$ and $Y_{t}=\sup
\{n:J_{n}\leq t\}\ $are defined in Section 2. Then
\begin{eqnarray*}
\liminf\limits_{t\rightarrow \infty }\frac{\sum\nolimits_{|\sigma
|=t}d(\sigma ,\emptyset )}{t3^{t}} &\geq &\liminf\limits_{t\rightarrow
\infty }\frac{\sum\nolimits_{k=2}^{t-1}\mathbb{E}(Y_{t-k})\frac{2^{k}-2}{%
3^{k}}}{t}, \\
\limsup_{t\rightarrow \infty }\frac{\sum\nolimits_{|\sigma |=t}d(\sigma
,\emptyset )}{t3^{t}} &\leq &\limsup_{t\rightarrow \infty }\frac{%
\sum\nolimits_{k=2}^{t-1}\mathbb{E}(Y_{t-k})\frac{2^{k}-2}{3^{k}}}{t}.
\end{eqnarray*}
\end{lemma}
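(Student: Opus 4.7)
The plan is to derive an exact formula for the LHS numerator in terms of $\mathbb{E}(Y_{t-1})$, use the discrete renewal equation to match the RHS numerator against $\mathbb{E}(Y_t)$, and then bound the difference of the two ratios by $2/t$ using only the elementary facts that $Y_t-Y_{t-1}\in\{0,1\}$ almost surely and $0\le\Pr(S\le t)\le 1$.

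First, by Proposition 1, $d(\sigma,\emptyset)=\omega(\sigma)$. Since $\omega$ depends only on the \emph{pattern} of distinct letters and is therefore invariant under permutations of $\{1,2,3\}$, the sum $\sum_{|\sigma|=t}\omega(\sigma)$ equals three times the corresponding sum over length-$t$ words ending in $1$. Parametrising such words as $(x1)|_{-t}$ with the first $t-1$ letters of $x$ uniform under $\mu$, and using the bijection discussed around (\ref{haha})--(\ref{haha1}) between the normal decomposition blocks of (\ref{tt}) and the renewal epochs $p_1<p_2<\cdots$, one checks that $\omega((x1)|_{-t})=Y_{t-1}(x)+1$ (each visible renewal $p_n\le t-1$ closes off a full block, and the letters to the left of the last visible renewal assemble into the partial leftmost block $\tau_1$). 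This yields the exact identity
\begin{equation*}
\sum_{|\sigma|=t}\omega(\sigma)\;=\;3^{t}\bigl(\mathbb{E}(Y_{t-1})+1\bigr).
\end{equation*}

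Next, one-step conditioning on $S_1$, together with $\mathbb{E}(Y_0)=0$ (forced by $S\ge 2$), yields the discrete renewal equation $\mathbb{E}(Y_t)=\Pr(S\le t)+\sum_{k=2}^{t-1}\Pr(S=k)\,\mathbb{E}(Y_{t-k})$, and since $\Pr(S=k)=(2^k-2)/3^k$ this rearranges to $\sum_{k=2}^{t-1}\mathbb{E}(Y_{t-k})(2^k-2)/3^k=\mathbb{E}(Y_t)-\Pr(S\le t)$. Combining this with the identity of the previous paragraph,
\begin{equation*}
\frac{\sum_{|\sigma|=t}d(\sigma,\emptyset)}{t\,3^t}-\frac{\sum_{k=2}^{t-1}\mathbb{E}(Y_{t-k})(2^k-2)/3^k}{t}\;=\;\frac{1-\bigl(\mathbb{E}(Y_t)-\mathbb{E}(Y_{t-1})\bigr)+\Pr(S\le t)}{t}.
\end{equation*}
Because $Y_t$ grows by at most one per step, $\mathbb{E}(Y_t)-\mathbb{E}(Y_{t-1})\in[0,1]$, and $\Pr(S\le t)\in[0,1]$, so this difference lies in $[0,\,2/t]$. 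Its nonnegativity gives $\liminf\text{LHS}\ge\liminf\text{RHS}$, and its upper bound by $2/t\to 0$ gives $\limsup\text{LHS}\le\limsup\text{RHS}$, which are the two claimed inequalities.

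The main obstacle is the identification $\omega((x1)|_{-t})=Y_{t-1}(x)+1$ that underlies the first displayed identity: one must verify the bijection between words of length $t$ ending in a fixed letter and renewal trajectories $x\in\Sigma$, handle the boundary case $p_{Y_{t-1}}=t-1$ (where $\tau_1$ has length exactly one), and keep the $Y$-index aligned with the length of the truncation $(x1)|_{-t}$. Once this identification is granted---and the paper's enumeration (\ref{hei}) together with the probability computation (\ref{haha}) package the required bookkeeping---the renewal-equation manipulation and the sandwich argument above are entirely routine.
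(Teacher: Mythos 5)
Your proof is correct, and while it rests on the same combinatorial core as the paper's --- the correspondence between the blocks of the normal decomposition (\ref{tt}) and the renewal epochs $p_{1}<p_{2}<\cdots$, which is exactly the content of (\ref{haha1}) --- the way you execute the comparison is genuinely different and sharper. The paper expands $\sum_{|\sigma |=t}\omega (\sigma )/3^{t}$ as a double sum over the length $k$ of the last block and the number $l$ of blocks, peels off the last block via $W_{k_{1}\cdots k_{l}}=T(k_{l})W_{k_{1}\cdots k_{l-1}}$ (equation (\ref{test})) to convert counts into the probabilities $\mu (A_{k_{1}\cdots k_{l-1}})$ of (\ref{haha}), and then sandwiches the coefficient $l$ between $l-2$ and $(l-2)+2$, recognizing the $(l-2)$-sum as $\sum_{k}\mathbb{E}(Y_{t-k})T(k)/3^{k}$; the resulting $O(1)$ additive errors die after division by $t$. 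You instead collapse the correspondence into the exact pointwise identity $\omega ((x1)|_{-t})=Y_{t-1}(x)+1$, hence the closed form $\sum_{|\sigma |=t}d(\sigma ,\emptyset )=3^{t}(\mathbb{E}(Y_{t-1})+1)$, and then rewrite the right-hand numerator through the one-step renewal equation $\mathbb{E}(Y_{t})=\Pr (S\le t)+\sum_{k=2}^{t-1}\Pr (S=k)\mathbb{E}(Y_{t-k})$ --- a tool the paper never invokes. Your bounds are right: $Y_{t}-Y_{t-1}\in \{0,1\}$ because the $J_{n}$ are strictly increasing, and $\mathbb{E}(Y_{0})=0$ because $S\ge 2$, so the difference of the two ratios lies in $[0,2/t]$, which yields both inequalities at once. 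This buys more than the lemma asks for: an exact identity with a quantitative $O(1/t)$ error, from which Proposition 4 follows immediately ($\alpha ^{\ast }=\lim_{t}\mathbb{E}(Y_{t-1})/t=2/9$ by the elementary renewal theorem) without the weighted-sum limit argument of Section 6. Two remarks on the step you flag as the main obstacle. First, your index alignment is in fact \emph{more} careful than the paper's: in (\ref{haha1}) the stated block lengths $k_{1}=t-p_{n}+1$ and $k_{i}=p_{n-i+2}-p_{n-i+1}$ sum to $t+1$, not $t$, so the paper's own identification carries an off-by-one that your version ($Y_{t-1}$, not $Y_{t}$, for words of length $t$) silently repairs; my spot checks ($t=3$: $33=27\cdot (1+2/9)$; $t=4$: $117=81\cdot (1+4/9)$) confirm your identity. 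Second, the verification you compress --- each gap $x_{p_{m}-1}\cdots x_{p_{m-1}}$ has exactly two letters and is terminated on the left by the third letter, and the leftover prefix $x_{t-1}\cdots x_{p_{n}}$ has at most two letters because it sits inside the incomplete block $x_{p_{n+1}-1}\cdots x_{p_{n}}$ --- is at the same level of detail as the paper's treatment of (\ref{haha1}), so I do not regard it as a gap.
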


\begin{proof}
For $Y_{t^{\prime }}=\sup \{n:J_{n}\leq t^{\prime }\}$, using (\ref{haha1})
we have%
\begin{equation*}
\mathbb{E}(Y_{t^{\prime }})=\sum_{q}\sum\limits_{\substack{ k_{1}+\cdots
+k_{q}=t^{\prime } \\ k_{1}\geq 1,k_{2},\cdots ,k_{q}\geq 2}}(q-1)\mu
(A_{k_{1}\cdots k_{q}}).
\end{equation*}%
Using (\ref{test}) and (\ref{haha}), we obtain that
\begin{eqnarray*}
&&\frac{\sum\nolimits_{|\sigma |=t}\omega (\sigma )}{3^{t}} \\
&=&\frac{e(t)}{3^{t}}+\sum\limits_{k=2}^{t-1}\sum\limits_{l\geq 2}\Bigg(
l\cdot \frac{T(k)}{3^{k}}\sum\limits_{\substack{ k_{1}+\cdots +k_{l-1}=t-k
\\ k_{1}\geq 1,k_{2},\cdots ,k_{l-1}\geq 2}}\mu (A_{k_{1}\cdots
k_{l-1}})\Bigg)
\end{eqnarray*}%
where $\frac{e(t)}{3^{t}}\rightarrow 0$. We notice that%
\begin{eqnarray*}
&&\sum\limits_{k=2}^{t-1}\sum\limits_{l\geq 2}\sum\limits_{\substack{ %
k_{1}+\cdots +k_{l-1}=t-k \\ k_{1}\geq 1,k_{2},\cdots ,k_{l-1}\geq 2}}l\cdot
\frac{T(k)}{3^{k}}\mu (A_{k_{1}\cdots k_{l-1}}) \\
&\leq &2\sum\limits_{k\geq 2}\frac{T(k)}{3^{k}}+\sum\limits_{k=2}^{t-1}\sum%
\limits_{l\geq 2}\sum\limits_{\substack{ k_{1}+\cdots +k_{l-1}=t-k \\ %
k_{1}\geq 1,k_{2},\cdots ,k_{l-1}\geq 2}}(l-2)\frac{T(k)}{3^{k}}\mu
(A_{k_{1}\cdots k_{l-1}}) \\
&\leq &2+\sum\limits_{k=2}^{t-1}\mathbb{E}(Y_{t-k})\frac{T(k)}{3^{k}}.
\end{eqnarray*}%
On the other hand, we have%
\begin{eqnarray*}
&&\sum\limits_{k=2}^{t-1}\sum\limits_{l\geq 2}\sum\limits_{\substack{ %
k_{1}+\cdots +k_{l-1}=t-k \\ k_{1}\geq 1,k_{2},\cdots ,k_{l-1}\geq 2}}l\cdot
\frac{T(k)}{3^{k}}\mu (A_{k_{1}\cdots k_{l-1}}) \\
&\geq &\sum\limits_{k=2}^{t-1}\sum\limits_{l\geq 2}\sum\limits_{\substack{ %
k_{1}+\cdots +k_{l-1}=t-k \\ k_{1}\geq 1,k_{2},\cdots ,k_{l-1}\geq 2}}(l-2)%
\frac{T(k)}{3^{k}}\mu (A_{k_{1}\cdots k_{l-1}}) \\
&\geq &\sum\limits_{k=2}^{t-1}\mathbb{E}(Y_{t-k})\frac{T(k)}{3^{k}}.
\end{eqnarray*}%
Notice that $\frac{e(t)}{3^{t}}\rightarrow 0,$ then the lemma follows.
\end{proof}

\begin{proof}[Proof of Proposition 4]
Notice that $\frac{\mathbb{E}(Y_{i})}{i}\rightarrow \frac{2}{9}$ as $%
i\rightarrow \infty $ and
\begin{equation*}
\frac{\sum\nolimits_{k=2}^{t-1}\left( \mathbb{(}t-k\mathbb{)\cdot }\frac{%
2^{k}-2}{3^{k}}\right) }{t}\rightarrow 1\text{ as }t\rightarrow \infty ,
\end{equation*}%
using Lemma \ref{l: lim} we obtain that%
\begin{eqnarray*}
\alpha ^{\ast }=\lim_{t\rightarrow \infty }\frac{\sum\nolimits_{|\sigma
|=t}\omega (\sigma )}{t3^{t}} &=&\lim_{t\rightarrow \infty }\frac{%
\sum\nolimits_{k=2}^{t-1}\mathbb{E}(Y_{t-k})\frac{2^{k}-2}{3^{k}}}{t} \\
&=&\lim_{t\rightarrow \infty }\frac{\sum\nolimits_{k=2}^{t-1}\frac{\mathbb{E}%
(Y_{t-k})}{t-k}(t-k)\frac{2^{k}-2}{3^{k}}}{t} \\
&=&\lim_{i\rightarrow \infty }\frac{\mathbb{E}(Y_{i})}{i}=\frac{2}{9}.
\end{eqnarray*}%
\bigskip
\end{proof}

\end{document}